\newif\ifDRAFT
\newcommand{\marrow}{\marginpar[\hfill$\longrightarrow$]{$\longleftarrow$}}
\newcommand{\niceremark}[3]
   {\textcolor{red}{\textsc{#1 #2:} \marrow\textsf{#3}}}
\newcommand{\niceremarkblue}[3]
   {\textcolor{blue}{\textsc{#1 #2:} \marrow\textsf{#3}}}
\newcommand{\Ken}[2][says]{\niceremark{Ken}{#1}{#2}}
\newcommand{\David}[2][says]{\niceremark{David}{#1}{#2}}
\newcommand{\Haim}[2][says]{\niceremarkblue{Haim}{#1}{#2}}
\newcommand{\Ken}[1]{}
\newcommand{\David}[1]{}
\newcommand{\Haim}[1]{}
\newcommand{\noun}[1]{\textsc{#1}}
\providecommand{\tabularnewline}{\\}
\providecommand{\algorithmname}{Algorithm}
  \theoremstyle{remark}
  \newtheorem*{rem*}{\protect\remarkname}
\theoremstyle{plain}
\newtheorem{thm}{\protect\theoremname}
  \theoremstyle{plain}
  \newtheorem{prop}[thm]{\protect\propositionname}
  \theoremstyle{plain}
  \newtheorem{lem}[thm]{\protect\lemmaname}
\theoremstyle{definition}
\newtheorem{defn}{\protect\definitionname}
\providecommand{\lemmaname}{Lemma}
\providecommand{\propositionname}{Proposition}
\providecommand{\remarkname}{Remark}
\providecommand{\theoremname}{Theorem}
\providecommand{\definitionname}{Definition}
\title{Faster Kernel Ridge Regression Using Sketching and Preconditioning}
\author{
Haim Avron \\
Tel Aviv University\\
haimav@post.tau.ac.il \\
\and
Kenneth L. Clarkson \\
IBM Almaden Research Center \\
klclarks@us.ibm.com \\
\and
David P. Woodruff \\
IBM Almaden Research Center \\
dpwoodru@us.ibm.com
}
\begin{document}

\maketitle

\global\long\def\R{\mathbb{R}}

\global\long\def\H{{\cal H}}

\global\long\def\X{{\cal X}}

\global\long\def\Y{{\cal Y}}

\global\long\def\e{{\mathbf{e}}}

\global\long\def\et#1{{\e(#1)}}

\global\long\def\ef{{\mathbf{\et{\cdot}}}}

\global\long\def\x{{\mathbf{x}}}

\global\long\def\q{{\mathbf{q}}}

\global\long\def\xt#1{{\x(#1)}}

\global\long\def\xf{{\mathbf{\xt{\cdot}}}}

\global\long\def\d{{\mathbf{d}}}

\global\long\def\b{{\mathbf{b}}}

\global\long\def\u{{\mathbf{u}}}

\global\long\def\y{{\mathbf{y}}}

\global\long\def\w{{\mathbf{w}}}

\global\long\def\yt#1{{\y(#1)}}

\global\long\def\yf{{\mathbf{\yt{\cdot}}}}

\global\long\def\z{{\mathbf{z}}}

\global\long\def\v{{\mathbf{v}}}

\global\long\def\h{{\mathbf{h}}}

\global\long\def\s{{\mathbf{s}}}

\global\long\def\c{{\mathbf{c}}}

\global\long\def\p{{\mathbf{p}}}

\global\long\def\f{{\mathbf{f}}}

\global\long\def\rb{{\mathbf{r}}}

\global\long\def\rt#1{{\rb(#1)}}

\global\long\def\rf{{\mathbf{\rt{\cdot}}}}

\global\long\def\mat#1{{\ensuremath{\bm{\mathrm{#1}}}}}

\global\long\def\matN{\ensuremath{{\bm{\mathrm{N}}}}}

\global\long\def\matX{\ensuremath{{\bm{\mathrm{X}}}}}

\global\long\def\matA{\ensuremath{{\bm{\mathrm{A}}}}}

\global\long\def\matB{\ensuremath{{\bm{\mathrm{B}}}}}

\global\long\def\matC{\ensuremath{{\bm{\mathrm{C}}}}}

\global\long\def\matD{\ensuremath{{\bm{\mathrm{D}}}}}

\global\long\def\matP{\ensuremath{{\bm{\mathrm{P}}}}}

\global\long\def\matU{\ensuremath{{\bm{\mathrm{U}}}}}

\global\long\def\matM{\ensuremath{{\bm{\mathrm{M}}}}}

\global\long\def\matR{\mat R}

\global\long\def\matS{\mat S}

\global\long\def\matY{\mat Y}

\global\long\def\matI{\mat I}

\global\long\def\matJ{\mat J}

\global\long\def\matZ{\mat Z}

\global\long\def\matV{\mat V}

\global\long\def\matL{\mat L}

\global\long\def\matQ{\mat Q}

\global\long\def\matK{\mat K}

\global\long\def\matH{\mat H}

\global\long\def\S#1{{\mathbb{S}_{N}[#1]}}

\global\long\def\IS#1{{\mathbb{S}_{N}^{-1}[#1]}}

\global\long\def\PN{\mathbb{P}_{N}}

\global\long\def\TNormS#1{\|#1\|_{2}^{2}}

\global\long\def\TNorm#1{\|#1\|_{2}}

\global\long\def\InfNorm#1{\|#1\|_{\infty}}

\global\long\def\FNorm#1{\|#1\|_{F}}

\global\long\def\FNormS#1{\|#1\|^{2}_{F}}

\global\long\def\UNorm#1{\|#1\|_{\matU}}

\global\long\def\UNormS#1{\|#1\|_{\matU}^{2}}

\global\long\def\UINormS#1{\|#1\|_{\matU^{-1}}^{2}}

\global\long\def\ANorm#1{\|#1\|_{\matA}}

\global\long\def\XNorm#1#2{\|#1\|_{#2}}

\global\long\def\BNorm#1{\|#1\|_{\mat B}}

\global\long\def\ANormS#1{\|#1\|_{\matA}^{2}}

\global\long\def\AINormS#1{\|#1\|_{\matA^{-1}}^{2}}

\global\long\def\T{\textsc{T}}

\global\long\def\pinv{\textsc{+}}

\global\long\def\Expect#1{{\mathbb{E}}\left[#1\right]}

\global\long\def\ExpectC#1#2{{\mathbb{E}}_{#1}\left[#2\right]}

\global\long\def\dotprod#1#2#3{(#1,#2)_{#3}}

\global\long\def\dotprodsqr#1#2#3{(#1,#2)_{#3}^{2}}

\global\long\def\Trace#1{{\bf Tr}\left(#1\right)}

\global\long\def\range#1{{\bf Range}\left(#1\right)}

\global\long\def\span#1{{\bf Sp}\left(#1\right)}

\global\long\def\nnz#1{{\bf nnz}\left(#1\right)}

\global\long\def\vol#1{{\bf vol}\left(#1\right)}

\global\long\def\sign#1{{\bf sign}\left(#1\right)}

\global\long\def\poly#1{{\bf poly}\left(#1\right)}

\begin{abstract}
  Kernel Ridge Regression is a simple yet powerful technique for non-parametric
  regression whose computation amounts to solving a linear system. This system is usually dense and
  highly ill-conditioned. In addition, the dimensions of the matrix are the same as the number of
  data points, so direct methods are unrealistic for large-scale datasets.
  In this paper, we propose a preconditioning technique for accelerating the solution of the
  aforementioned linear system. The preconditioner is based on random feature maps, such as random
  Fourier features, which have recently emerged as a powerful
  technique for speeding up and scaling the training of kernel-based methods, such as
  kernel ridge regression, by resorting to approximations. However, random feature maps only provide
  crude approximations to the kernel function, so delivering state-of-the-art results by directly solving the approximated system requires the number of random
  features to be very large. We show that random feature maps can be much more effective in forming
  preconditioners, since under certain conditions a not-too-large number of random features
  is sufficient to yield an effective preconditioner. We empirically evaluate our method
  and show it is highly effective for datasets of up to one million training examples.

\if0
  Random feature maps, such as random Fourier features, have recently emerged as a powerful
  technique for speeding up and scaling the training of kernel-based methods such as
  kernel ridge regression. However, random feature maps only provide crude approximations
  to the kernel function, so delivering state-of-the-art results requires the number of random
  features to be very large. Nevertheless, in some cases, even when the number of random features
  is driven to be as large as the training size, full recovery of the performance of the exact kernel
  method is not attained. In order to address this issue, we propose to use random feature maps to form
  preconditioners to be used in solving kernel ridge regression to high accuracy. We provide theoretical
  conditions on when this yields an effective preconditioner, and empirically evaluate our method
  and show it is highly effective for datasets of up to one million training examples.
\fi
\end{abstract}

\section{Introduction}
\label{sec:introduction}

{\em Kernel Ridge Regression (KRR)} is a simple yet powerful technique for non-parametric
regression whose computation amounts to solving a linear system. Its underlying mathematical framework
is as follows. A kernel function, $k : {\cal X} \times {\cal X} \to \R$, is defined on the
input domain ${\cal X} \subseteq \R^d$. The kernel function $k$ may be (non-uniquely) associated with an
embedding of the input space into a high-dimensional Reproducing Kernel Hilbert Space ${\cal H}_k$ (with
inner product $\langle \cdot, \cdot \rangle_{\H_k}$) via a feature map, $\Psi:
\X \to \H_k$ such that $k(\x, \z) = \langle \Psi(\x), \Psi(\z)\rangle_{\H_k}$. Given training
data $(\x_1, y_1),\dots,(\x_n,y_n)\in \X\times\Y$ and ridge parameter $\lambda$,
we perform linear ridge regression on $(\Psi(\x_1),y_1),\dots,(\Psi(\x_n),y_n)$.
Ultimately, the model has the form
\begin{equation*}
f(\x) = \sum^n_{i=1} c_i k(\x_i, \x)
\end{equation*}
where $c_1, \dots,c_n$ can be found by solving the linear equation
\begin{equation}
  \label{eq:krr}
  (\matK + \lambda \matI_n)\c = \y\,.
\end{equation}
Here $\matK\in\R^{n\times n}$ is the {\em kernel matrix} or {\em Gram matrix} defined by $\matK_{ij}\equiv k(\x_i, \x_j)$,
$\c = [c_1 \cdots c_n]^\T$ and $\y \equiv [y_1 \cdots y_n]^\T$.
See Saunders et al.~\cite{SGV98} for details.

Compared to Kernel Support Vector Machines (KSVM), the computations involved in KRR are conceptually much simpler:
solving a single linear system as opposed to solving a convex quadratic optimization
problem. However, KRR has been observed experimentally to often perform just as well as KSVM~\cite{FM01}.
In this paper, we exploit the conceptual simplicity of KRR, and using advanced techniques in
numerical linear algebra design an efficient method for solving~\eqref{eq:krr}.

For widely used kernel functions, the kernel matrix $\matK$ is fully dense, so solving~\eqref{eq:krr} using standard direct methods
takes $\Theta(n^3)$, which is prohibitive even for modest $n$. Although statistical analysis does suggest
that iterative methods can be stopped early~\cite{BK10}, the condition number of $\matK$ tends
to be so large that a large number of iterations are still necessary. Thus, it is not surprising that the literature
has moved towards designing approximate methods.

One popular strategy is the Nystr\"{o}m method~\cite{WilliamsSeeger00} and variants that
improve the sampling process~\cite{KMT12, GM13}.
We also note recent work by Yang et al.~\cite{YPM15} that replace
the sampling with sketching. More relevant to this paper is the line of
research on randomized construction of approximate feature maps, originating from the seminal
work of Rahimi and Recht~\cite{RahimiRecht07}.
The underlying idea is to
construct a distribution $D$ on functions from $\X$ to $\R^s$ ($s$ is a parameter) such that
\begin{equation*}
k(\x,\z) = \ExpectC{\varphi \sim D}{\varphi(\x)^\T \varphi(\z)}\,.
\end{equation*}
One then samples a $\varphi$ from $D$ and uses $\tilde{k}(\x, \z)\equiv \varphi(\x)^\T \varphi(\y)$
as an alternative kernel. Training can be done in $O(ns^2 + T_\varphi(\x_1,\dots,\x_n))$ where $T_\varphi(\x_1,\dots,\x_n)$ is the time
required to compute $\varphi(\x_1),\dots,\varphi(\x_n)$. This technique  has been used in recent years
to obtain state-of-the-art accuracies for some important datasets~\cite{HuangEtAl14, DaiEtAl14, AvronSindhwani14,ChenEtAl16}.
We also note recent work on using random features to construct stochastic gradients~\cite{LuEtAl14}.

One striking feature of the aforementioned papers is the use of a very large number of random features.
Random feature maps typically provide only crude approximations to the kernel function, so
to approach the full capacity of exact kernel learning (which is required to obtain state-of-the-art
results) many random features are necessary. Nevertheless, even with a very large number of
random features, we sometimes pay a price in terms of generalization performance. Indeed, in section~\ref{sec:vs-rfm}
we show that in some cases, driving $s$ to be as large as $n$ is not sufficient to achieve
the same test error rate as that of the full kernel method. Ultimately, methods that use approximations compromise
in terms of performance in order to make the computation tractable.

\subsection{Contributions}

We propose to use random feature maps as a means of forming a preconditioner for
the kernel matrix. This preconditioner can be used to solve~\eqref{eq:krr} to high accuracy using an iterative method.
Thus, while training time still benefits from the use of high-quality random feature
maps, there is no compromise in terms of modeling capabilities, modulo the decision
to use kernel ridge regression and not some other learning method.

We provide a theoretical analysis that shows that at least for one kernel selection, the polynomial kernel,
selecting the number of random features to be proportional to the so-called \emph{statistical dimension}
of the problem (this classical quantity is also frequently referred to as the \emph{effective degrees-of-freedom})
yields a high-quality preconditioner in the sense that the relevant condition number is bounded by a constant.
These can be viewed as a generalization of recent results on sharper bounds for linear regression
and low-rank approximation with regularization~\cite{ACW16}.
In addition, we discuss a method for testing whether the preconditioner computed by our algorithm is indeed such
that the relevant condition number is bounded. While our analytical results are mostly of theoretical value (e.g. they
are limited to only the polynomial kernel and are likely very pessimistic), they do expose an important connection
between the statistical dimension and preconditioner size.

Finally, we report experimental results with a distributed-memory parallel implementation of our algorithm. We
demonstrate the effectiveness of our algorithm for training high-quality models using both
the Gaussian and polynomial kernel on datasets as large as one million training examples without
compromising in terms of statistical capacity. For example, on one dataset with one million examples
our code is able to solve~\eqref{eq:krr} to relatively high accuracy in about an hour on resources readily
available to researchers and practitioners (a cluster of EC2 instances).

An open-source implementation of the algorithm is available through the libSkylark
library (\url{http://xdata-skylark.github.io/libskylark/}).


\subsection{Related Work}

Devising scalable methods for kernel methods has long been an active research topic.
In terms of approximations, one dominant line of work constructs low-rank approximations of the Gram matrix.
Two popular variants of this approach, as mentioned earlier, are the randomized feature maps and
the Nystr\"{o}m methods~\cite{WilliamsSeeger00}. There are many variants of these approximation schemes, and it is outside
the scope of this paper to mention all of them. Recent work has also focused on devising scalable methods
that are capable of utilizing rather high rank approximations~\cite{DaiEtAl14, AvronSindhwani14, LuEtAl14}.
In contrast, our goal is to develop a method which is capable of using lower rank approximation without paying
a price in terms of model quality.

Another approach is to approximate the kernel matrix so that it will be more amenable to matrix-vector products or
linear system solution. One idea is to use the Fast Gauss Transform to accelerate the matrix-vector products of the kernel matrix
by an arbitrary vector~\cite{RD06, MorariuEtAl08}.
Another approach is to use a tree code to efficiently perform matrix-vector products~\cite{MorariuEtAl08, CWA14}. Related is also the
hierarchical matrix approach in which an hierarchical matrix approximation to the kernel matrix is built~\cite{CAS16}. This
representation is amenable to efficient implementation of wide range of operations on the approximate kernel matrix, including matrix-vector product
and linear system solution.

The preconditioning approach has also been explored in the literature. Srinivasan et al. propose to use a regularized
kernel matrix as a preconditioner in a flexible Krylov method~\cite{SrinivasanEtAl14}.
The regularized kernel matrix, which has a lower condition number,  is solved using an inner conjugate gradient iteration.
In parallel work to ours, Cutajar et al. recently discussed various preconditioning techniques for kernel matrices~\cite{COCF16}.
One of the methods they propose is using random features to form a preconditioner. However, unlike our work, they
do not include any theoretical analysis of this preconditioning approach. Furthermore, we propose additional
algorithmic enhancements (multiple level preconditioning, testing preconditioners). Finally, it is worth mentioning
that Cutajar et al. only experiment with small-scale low-dimensional datasets, while we present experimental results
with large-scale high-dimensional datasets.

For a broad discussion of scalable methods for kernel learning, including of ideas not mentioned here, see Bottou et al.~\cite{Bottou2007}.
\section{Preliminaries}
\label{sec:preliminaries}

\subsection{Basic Definitions and Notation}

We denote scalars using Greek letters or using $x,y,\dots$. Vectors
are denoted by $\x,\y,\dots$ and matrices by $\matA,\mat B,\dots$.
The $s\times s$ identity matrix is denoted $ \matI_s$.
We use the convention that vectors are column-vectors. We use $\nnz{\cdot}$ to
denote the number of nonzeros in a vector or matrix. We denote by $[n]$ the
set ${1,\dots,n}$. The notation $\alpha = (1 \pm \gamma)\beta$ means
that $(1- \gamma)\beta \leq \alpha \leq (1 + \gamma)\beta$.

A symmetric matrix $\matA$ is positive semi-definite (PSD) if
$\x^\T \matA \x \geq 0$ for every vector $\x$. It is positive definite (PD) if
$\x^\T \matA \x > 0$ for every vector $\x \neq 0$.
For any two symmetric matrices $\matA$ and $\matB$ of the same size, $\matA \preceq \matB$ means
that $\matB - \matA$ is a PSD matrix.
For a PD matrix $\matA$, $\ANorm{\cdot}$ denotes the
norm induced by $\matA$, i.e. $\ANormS{\x}\equiv\x^\T \matA \x$.

We denote the training set by $(\x_1, y_1), \dots, (\x_n, y_n) \in \X \times \Y \subseteq \R^d \times \R$.
Note that $n$ denotes the number of training examples, and $d$ their dimension.
We denote the kernel, which is a function from $\X \times \X$ to $\R$, by $k$.
We denote the kernel matrix by $\matK$, i.e. $\matK_{ij} = k(\x_i, \x_j)$.
The associated Reproducing Kernel Hilbert Space (RKHS) is denoted by ${\cal H}_k$,
and the associated inner product by $\dotprod{\cdot}{\cdot}{{\cal H}_k}$.
We use $\lambda$ to denote the ridge regularization parameter, which we always
assume to be greater than 0.

\subsection{Random Feature Maps}
\label{sec:rfm}

As explained in the introduction, a {\em random feature map} is a
distribution $D$ on functions from $\X$ to $\R^s$ such that
\begin{equation*}
k(\x,\z) = \ExpectC{\varphi \sim D}{\varphi(\x)^\T \varphi(\z)}\,.
\end{equation*}
Throughout the paper, we use $s$ to denote the number of random features. This
quantity is a parameter of the various algorithms.

In recent years, diverse uses of random features have been seen for a wide spectrum of techniques and problems.
However, the original motivation is the following technique, originally due to Rahimi and Recht~\cite{RahimiRecht07},
which we refer to as the {\em Random Features Method}:
sample a $\varphi$ from $D$ and use $\tilde{k}(\x, \z)\equiv \varphi(\x)^\T \varphi(\z)$
as an alternative kernel. For KRR the resulting model is
\begin{equation*}
f(\x) = \varphi(\x)^\T (\matZ^\T \matZ + \lambda \matI_s)^{-1}\matZ^\T \y
\end{equation*}
where $\matZ \in \R^{n\times s}$ has $i$'th row $\z_i = \varphi(\x_i)$.
Thus, training can be done in $O(ns^2 + T_\varphi(\x_1,\dots,\x_n))$ where $T_\varphi(\x_1,\dots,\x_n)$ is the time
required to compute $\z_1,\dots,\z_n$.

Although our proposed method can be composed with any of the many feature maps
suggested in recent literature, we discuss and experiment with two specific feature maps.
The first is {\em random Fourier features} originally suggested by Rahimi and Recht~\cite{RahimiRecht07}.
The underlying observation that lead to this transform is that a shift-invariant kernel\footnote{
That is, it is possible to write $k(\x,\z)=k_0(\x-\z)$ for some positive definite function $k_0:\R^d\rightarrow\R$.
} $k$ for which $k(\x,\z)=1$ for all $\x\in\X$ can be expressed as
\begin{equation*}
k(\x, \z) = \ExpectC{\w \sim p, b \sim U(0,2\pi)}{\cos(\w^\T\x + b) \cos(\w^\T\y + b)}
\end{equation*}
where $p$ is some appropriate distribution that depends on the kernel function (e.g. Gaussian distribution
for the Gaussian kernel). The existence of such a $p$ for every shift-invariant kernel function $k$ is a
consequence of Bochner's Theorem; see Rahimi and Recht~\cite{RahimiRecht07}
for details. The feature map is then a Monte-Carlo sample:
$\varphi(\x) = s^{-1/2} [\cos(\w^\T_1\x + b_1) \dots \cos(\w^\T_s\x + b_n)]^\T$
where $\w_1,\dots,\w_s$ are sampled from $p$ and $b_1,\dots,b_n$ are sampled from a uniform
distribution on $[0,2\pi]$.

The second feature map we discuss in this paper is {\sc TensorSketch}~\cite{Pagh13}, which
is designed to be used for the polynomial kernel $k(\x,\z)=(\x^\T\z)^q$.
In order to describe {\sc TensorSketch} we first describe {\sc CountSketch}~\cite{CCF04}.
Suppose we want to sketch a $d$ dimensional vector to an $s$ dimensional vector.
{\sc CountSketch} is specified by a $2$-wise independent hash function $h:[d] \rightarrow [s]$
and a $2$-wise independent sign function $g:[d] \rightarrow \{+1,-1\}$.
Suppose {\sc CountSketch} is applied to a vector $\x\in\R^d$ to yield $\z\in\R^s$.
The value of coordinate $i$ of $\z$ is $\sum_{j \mid h(j) = i} g(j) x_j$
It is clear that this transformation can be represented
as a $s \times d$ matrix in which the $j$-th column contains a single non-zero entry $g(j)$
in the $h(j)$-th row. Therefore, the distribution on $h$ and $g$ defines a distribution on
$s \times d$ matrices.

{\sc TensorSketch} implicitly defines a random linear transformation
$\matS \in \R^{s\times d^q}$. The transform is
specified using $q$ $3$-wise independent hash
functions $h_1, \ldots, h_q: [d] \rightarrow [s]$,
and $q$ $4$-wise independent sign functions $g_1, \ldots, g_q: [d] \rightarrow \{+1, -1\}$.
The {\sc TensorSketch} matrix $\matS$ is then a  {\sc CountSketch} matrix with hash function
$H:[d]^q \rightarrow [s]$ and sign function $G:[d]^q \rightarrow \{+1, -1\}$ defined as follows:
\begin{equation*}
H(i_1, \ldots, i_q) \equiv h_1(i_1) + h_2(i_2) + \cdots + h_q(i_q) \bmod m,
\end{equation*}
and
\begin{equation*}
G(i_1, \ldots, i_q) \equiv g_1(i_1) \cdot g_2(i_1) \cdots g_q(i_q)\,.
\end{equation*}
We now index the columns of $\matS$ by $[d]^q$ and set column $(i_1, \ldots, i_q)$
to be equal to $G(i_1, \ldots, i_q)\cdot \e_{H(i_1, \ldots, i_q)}$ where $\e_j$ denotes
the $j$th identity vector.
Let $v_q : \R^d \to \R^{d^q}$ map each vector to the evaluation of all possible degree $q$ monomials
of the entries. Thus, $k(\x,\z) = v_q(\x)^\T v_q(\z)\approx v_q(\x)^\T\matS^\T \matS v_q(\z)$.
The feature map is then defined by $\varphi(\x) = \matS v_q(\x)$.

A crucial observation that makes this transformation useful is that via a clever application of the Fast Fourier Transform,
$\varphi(\x)$ can be computed in $O(q(\nnz{\x} + s \log{s}))$ (see Pagh~\cite{Pagh13} for details),
which allows for a fast application of the transform.

\subsection{Fast Numerical Linear Algebra Using Sketching}

Sketching has recently emerged as a powerful dimensionality reduction
technique for accelerating numerical linear algebra primitives typically
encountered in statistical learning such as linear regression,
low rank approximation, and principal component analysis.
The following description is only a brief semi-formal description of this emerging area.
We refer the interested reader to a recent surveys~\cite{Woodruff14, YMM15} for more information.

The underlying idea is to construct an embedding of a high-dimensional
space into a lower-dimensional one, and use this to accelerate the computation.
For example, consider the classical linear regression problem
\begin{equation*}
\label{eq:regression_prob}
\w^{\star}=\arg\min_{\w\in\R^{d}}\TNorm{\matX\w-\y},
\end{equation*}
where $\matX\in\R^{n\times d}$ is a sample-by-feature design matrix,
and $\y\in\R^{n}$ is the target vector. Sketching methods for linear regression
define a distribution on $s$-by-$n$ matrices, sample
a matrix $\matS$ from this distribution and solve the approximate problem
\begin{equation*}
\w=\arg\min_{\w\in\R^{d}}\TNorm{\matS\matX\w-\matS\y}\,.
\end{equation*}
If the distribution is an {\em Oblivious Subspace Embedding (OSE)} for $\range{[\matX\,\y]}$,
i.e. with high probability for all $\x \in \range{[\matX\,\y]}$ we have $\TNorm{\matS \x} = (1\pm \epsilon)\TNorm{\x}$,
then one can show that with high probability $\TNorm{\matX\w-\y}\leq(1+\epsilon)\TNorm{\matX\w^{\star}-\y}$.
See Drineas et al.~\cite{DrineasEtAl11}.

The ``sketch-and-solve'' approach just described allows only crude approximations: the $\epsilon$-dependence
for OSEs is $\epsilon^{-2}$. There is also an alternative ``sketch-to-precondition'' approach, which
enjoy a much better $\log(1/\epsilon)$ dependence for $\epsilon$ and so
supports very high accuracy approximations.

For linear regression the idea is as follows. The sketched
matrix is factored, $\matS \matX  =\matQ\matR$, and the un-sketched original problem
is solved using an iterative method, with $\matR$ serving as a preconditioner.
This technique has been shown to be very effective in solving linear regression to high
accuracy~\cite{AMT10,MSM14}.

\subsection{Random Feature Maps as Sketching}

A random feature map $\varphi$ can naturally be extended from defining a function
$\X \to \R^s$ to one that defines a function
$\span{\{k(\x_1, \cdot), \dots, k(\x_n, \cdot) \} } \to \R^s$ via
\begin{equation*}
\varphi\left( \sum^n_{i=1}\alpha_ik(x_i, \cdot)\right) \equiv \sum^n_{i=1}\alpha_i \varphi(\x_i)\,.
\end{equation*}
Thus, random feature maps can be viewed as a sketch that embeds $$\span{\{k(\x_1, \cdot), \dots, k(\x_n, \cdot) \} }$$
 in $\R^s$ (here $\span{\cdot}$ denotes the span of set of functions $\X \to \R$). At least in one case, this embedding is an OSE, allowing stronger analysis of algorithms involving
such feature maps: {\sc TensorSketch} defines an OSE~\cite{ANW14}.

Viewed this way, the random features method is a ``sketch-and-solve'' approach. It is therefore
not surprising that it produces suboptimal models. In this paper, we take the ``sketch-to-precondition''
approach to utilizing sketching.

\section{Random Features Preconditioning}
\label{sec:algorithm}

\subsection{Algorithm}

We propose to use the random feature maps to form a preconditioner for
$\matK + \lambda \matI_n$.
In particular, let $\matZ \in \R^{n\times s}$ have rows $\z^\T_1,\dots,\z^\T_n \in \R^s$
  where $\z_i = \varphi(\x_i)$. Here and throughout the rest of the paper,  $\varphi$ is a sample from
  the distribution defined by the random feature map.
  We assume that  $s < n$. We solve $(\matK + \lambda \matI_n)\c = \y$ using
  Preconditioned Conjugate Gradients (PCG) with $\matZ \matZ^\T + \lambda \matI_n$
  as a preconditioner.

  Using $\matZ \matZ^\T + \lambda \matI_n$ as preconditioner requires in each iteration
  the computation of $(\matZ \matZ^\T + \lambda \matI_n)^{-1}\x$ for some vector $\x$.
  To do this, in preprocessing we use the Woodbury formula and the Cholesky decomposition
  $\matL \matL^\T = \matZ^\T \matZ + \lambda \matI_s$ to obtain
  \begin{equation*}
  (\matZ \matZ^\T + \lambda \matI_n)^{-1}
  	= \lambda^{-1}\left(\matI_n - \matZ \left(\matZ^\T \matZ + \lambda \matI_s\right)^{-1}\matZ^\T\right)
	= \lambda^{-1}\left(\matI_n -\matU^\T \matU \right),
  \end{equation*}
  where $\matU = \matL^{-\T}\matZ^\T$. 
  While we could use the Cholesky decomposition $\matL$ to solve for
$\matZ\matZ^\T + \lambda \matI_n$ directly, we empirically observed that forming $\matU$
and using the above formula reduces the cost per iteration considerably
and more than compensates for the additional time spent on pre-processing.
Overall, the end result is a considerable improvement in running time in our implementation
(even though both approaches have the same asymptotic complexity).
One possible reason is that with our scheme we only need to do matrix-matrix products
(GEMM operations) to apply the preconditioner, and we avoid triangular solves (TRSM operation)
which tends not to exhibit good parallel scalability (we observed gains mostly on large
datasets where a large number of processes were used), although another possible reason might be specific
features of the underlying library used for parallel matrix operations (Elemental~\cite{PoulsonEtAl13}) and with another
library it might be preferable to use $\matL$ to solve for $\matZ\matZ^\T + \lambda \matI_n$ directly.
%
%
%
  A pseudocode description of the algorithm appears as Algorithm~\ref{alg:fastkrr}.

  Before analyzing the quality of the preconditioner we discuss the complexities of various operations associated with the algorithm.
  The cost of computing the kernel matrix $\matK$ depends on the kernel and the sparsity
  of the input data $\x_1,\dots,\x_n$. For the Gaussian kernel and the polynomial kernel
  the matrix can be computed in $O(n\sum^n_{i=1}\nnz{\x_i})$ time (although in many cases
  it might be beneficial to use the straightforward $\Theta(n^2 d)$ algorithm).
  The cost of computing $\z_1,\dots,\z_n$ depends on the specific kernel and feature
  map used as well. For the Gaussian kernel with random Fourier features it is $O(s\sum^n_{i=1}\nnz{\x_i})$,
  and for the polynomial kernel with \noun{TensorSketch} it is $O(q(\sum^n_{i=1}\nnz{\x_i} + ns\log{s}))$.
  Computing and decomposing $\matZ^\T \matZ + \lambda \matI_s$ takes $\Theta(ns^2)$, and then another $\Theta(ns^2)$
  for computing $\matU$. The dominant cost per iteration is now multiplying the kernel matrix by a vector, which
  is $\Theta(n^2)$ operations.

\begin{algorithm}[t]
\begin{algorithmic}[1]

  \STATE \textbf{Input: }Data $(\x_1,y_1),\dots, (\x_n,y_n)\in \R^d \times \R$, kernel $k(\cdot, \cdot)$, feature map
  generating algorithm $T$, $\lambda > 0$, $s < n$, accuracy parameter $\epsilon > 0$.

  \STATE

  \STATE Compute kernel matrix $\matK \in \R^{n \times n}$.

  \STATE Using $T$, create a feature map $\varphi : \R^d \to \R^s$.

  \STATE Compute $\z_i = \varphi(\x_i), i=1,\dots,n$ and stack them in a
  matrix $\matZ$.

  \STATE Compute $\matZ^\T \matZ$, and Cholesky decomposition
  $\matL^\T \matL = \matZ^\T \matZ + \lambda \matI_s$.

  \STATE Solve $\matL^{\T} \matU = \matZ^\T$ for $\matU$.

  \STATE Stack $y_1,\dots,y_n$ in a vector $\y\in\R^n$.

  \STATE Solve $(\matK + \lambda \matI_n)\c = \y$ using PCG to accuracy $\epsilon$ with
  $\matZ \matZ^\T + \lambda \matI_n$ as a preconditioner. In each iteration, in order to apply
  the preconditioner to some $\x$, compute $\lambda^{-1}(\x - \matU^\T \matU \x)$.

  \STATE

  \STATE {\bf return} $\tilde{\c}$

\end{algorithmic}
\protect\caption{\label{alg:fastkrr}Faster Kernel Ridge Regression using Sketching and Preconditioning.}
\end{algorithm}

\subsection{Analysis}
\label{sec:analysis}

We now analyze the algorithm
when applied to kernel ridge regression with the polynomial kernel and  \noun{TensorSketch}
as the feature map.  In particular, in the following theorem we show that if $s$ is large enough
then PCG will converge in $O(1)$ iterations (for a fixed convergence threshold).

\begin{thm}
  \label{thm:main}
  Let $\matK$ be the kernel matrix associated with the $q$-degree homogeneous polynomial kernel
  $k(\x,\z)=(\x^\T \z)^q$. Let $s_\lambda(\matK)\equiv\Trace{(\matK + \lambda \matI_n)^{-1} \matK}$.
  Let $\z_i = \varphi(\x_i)\in \R^s, i=1,\dots,n$ where $\varphi$ is a
  \noun{TensorSketch} map, and $\matZ \in \R^{n \times s}$ have rows $\z^\T_1,\dots,\z^\T_n$.
  Provided that
  \begin{equation}\label{eq:sbound}
    s \geq 4(2+3^q)s_\lambda(\matK)^2/\delta\,,
  \end{equation}
  with probability of at least $1-\delta$, after
  $$
  T = \left\lceil \frac{\sqrt{3}}{2} \ln(2/\epsilon) \right\rceil
  $$
  iterations of PCG on $\matK + \lambda \matI_n$ starting from the all-zeros vector with $\matZ \matZ^\T + \lambda \matI_n$ as a preconditioner
  we will find a $\tilde{\c}$ such that
  \begin{equation}\label{eq:bound}
  \XNorm{\tilde{\c} - \c}{\matK + \lambda \matI_n} \leq \epsilon \XNorm{\c}{\matK + \lambda \matI_n}\,.
  \end{equation}
  In the above, $\c$ is the exact solution of the linear equation at hand (Equation~\ref{eq:krr}).
\end{thm}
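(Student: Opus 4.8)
Write $\matM \equiv \matK + \lambda\matI_n$ and $\matP \equiv \matZ\matZ^\T + \lambda\matI_n$. I would first reduce the entire statement to a single quantitative fact: that the condition number $\kappa$ of the preconditioned operator $\matP^{-1/2}\matM\matP^{-1/2}$ (equivalently of $\matP^{-1}\matM$) satisfies $\kappa \le 3$. The standard convergence bound for conjugate gradients gives, when started from the zero vector, $\XNorm{\tilde{\c} - \c}{\matM} \le 2\rho^T\,\XNorm{\c}{\matM}$ with $\rho = (\sqrt{\kappa} - 1)/(\sqrt{\kappa} + 1)$, so it suffices that $2\rho^T \le \epsilon$. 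Using the elementary inequality $\ln\tfrac{1+x}{1-x}\ge 2x$ at $x = 1/\sqrt{\kappa}$, a bound $\kappa \le 3$ yields $\ln(1/\rho)\ge 2/\sqrt{\kappa} \ge 2/\sqrt{3}$, whence any integer $T \ge \tfrac{\sqrt{3}}{2}\ln(2/\epsilon)$ forces $2\rho^T\le\epsilon$; this is exactly the iteration count in the theorem. Thus the whole burden is to prove $\kappa \le 3$ with probability at least $1-\delta$.

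Next I would reduce $\kappa\le 3$ to a relative-error spectral approximation. If $\tfrac12\matM \preceq \matP \preceq \tfrac32\matM$, then $\tfrac23\matP\preceq\matM\preceq 2\matP$, so the eigenvalues of $\matP^{-1/2}\matM\matP^{-1/2}$ lie in $[\tfrac23, 2]$ and $\kappa \le 3$. To express $\matP - \matM$ usefully, let $\matV \in \R^{n\times d^q}$ have rows $v_q(\x_i)^\T$; then $\matK = \matV\matV^\T$, and since $\z_i = \matS v_q(\x_i)$ we have $\matZ = \matV\matS^\T$ and $\matZ\matZ^\T = \matV\matS^\T\matS\matV^\T$. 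Hence $\matP - \matM = \matV(\matS^\T\matS - \matI)\matV^\T$, and the two-sided bound is equivalent to $\TNorm{\matM^{-1/2}(\matP-\matM)\matM^{-1/2}} \le \tfrac12$. Whitening by $\matY \equiv \matM^{-1/2}\matV$, the matrix inside is $\matY\matS^\T\matS\matY^\T - \matY\matY^\T$, so the goal becomes $\TNorm{\matY\matS^\T\matS\matY^\T - \matY\matY^\T}\le\tfrac12$.

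For the final step I would bound the spectral norm by the Frobenius norm and invoke the second-moment guarantee of \noun{TensorSketch}: for any $\matA,\matB$ with $d^q$ columns, $\Expect{\FNormS{\matA\matS^\T\matS\matB^\T - \matA\matB^\T}} \le \tfrac{2+3^q}{s}\FNormS{\matA}\FNormS{\matB}$ (this is the origin of the $2+3^q$ factor in \eqref{eq:sbound}). Taking $\matA = \matB = \matY$ and using the key identity $\FNormS{\matY} = \Trace{\matV^\T\matM^{-1}\matV} = \Trace{(\matK+\lambda\matI_n)^{-1}\matK} = s_\lambda(\matK)$ bounds the expectation by $(2+3^q)s_\lambda(\matK)^2/s$. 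Markov's inequality then yields, with probability at least $1-\delta$, $\FNormS{\matY\matS^\T\matS\matY^\T - \matY\matY^\T}\le (2+3^q)s_\lambda(\matK)^2/(s\delta)$, and since $\TNormS{\cdot}\le\FNormS{\cdot}$ this is at most $\tfrac14$ exactly when $s \ge 4(2+3^q)s_\lambda(\matK)^2/\delta$, i.e.\ under hypothesis \eqref{eq:sbound}. Hence $\TNorm{\matY\matS^\T\matS\matY^\T - \matY\matY^\T}\le\tfrac12$, and chaining the three reductions closes the proof.

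I expect the crux to be the middle step, specifically the whitening reduction together with the recognition that the correct probabilistic tool is a second-moment matrix-product bound rather than a subspace-embedding (OSE) bound. This choice is what converts a sketch size scaling with $\mathrm{rank}(\matV)$ (which can be as large as $n$) into one scaling with the statistical dimension, because $\FNormS{\matY}$ equals $s_\lambda(\matK)$ \emph{exactly}; the quadratic $s_\lambda(\matK)^2$ and the $1/\delta$ (rather than $\log(1/\delta)$) dependence in \eqref{eq:sbound} are the tell-tale signatures of the Markov-on-variance step. Everything else---the CG rate manipulation and the final arithmetic---is routine bookkeeping around the known \noun{TensorSketch} moment estimate.
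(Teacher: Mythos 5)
Your proposal is correct and takes essentially the same route as the paper's proof: reduce the iteration bound to showing $\frac{1}{2}(\matK+\lambda\matI_n)\preceq \matZ\matZ^\T+\lambda\matI_n\preceq\frac{3}{2}(\matK+\lambda\matI_n)$, whiten by the regularized kernel matrix (the paper conjugates by the inverse of a $\lambda$-LQ factor $\matL$ where you use $\matM^{-1/2}$, with the same key identity that the whitened factor has squared Frobenius norm exactly $s_\lambda(\matK)$, cf.\ Propositions~\ref{prop:id} and~\ref{prop:stat_dim}), and then bound spectral norm by Frobenius norm via the \noun{TensorSketch} approximate matrix multiplication guarantee. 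Your moment-bound-plus-Markov step is precisely the content of the paper's Lemma~\ref{lem:ts_ose}, and your direct derivation of the PCG iteration count matches the bound the paper cites, so the two arguments coincide in all essentials.
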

\begin{rem*}
  The result is
  stated for the homogeneous polynomial kernel $k(\x,\z)=(\x^\T \z)^q$, but it can be easily generalized to
  the non-homogeneous case $k(\x,\z)=(\x^\T \z + c)^q$ by adding a constant feature to each training point.
\end{rem*}
\begin{rem*}
While  the theorem gives an explicit formula for the number of iterations, we do not recommend to actually
use this formula, and recommend instead the use of standard stopping criteria to declare convergence (these usually involve determining
that the residual norm has dropped below some tolerance). The reason is that the iteration bound holds only with high
probability. On the other hand, a higher probability bound bound holds when we consider a higher bound on the number of iterations.
Thus, using standard stopping criteria renders the algorithm more robust. Furthermore, the bound holds only under exact arithmetic, while in practice PCG is used
with inexact arithmetic.
\end{rem*}

The quantity $s_\lambda(\matK)$ is often referred to as the {\em statistical dimension} or
{\em effective degrees-of-freedom} of the problem. It frequently features in the analysis
of kernel regression~\cite{Zhang05,BK10}, low-rank approximations of kernel matrices~\cite{Bach13},
and analysis of sketching based approximate kernel learning~\cite{AlaouiMahoney15, YPM15}.

Estimating the statistical dimension is a non-trivial task that is outside the scope of this paper
(and can sometimes be avoided: see Section~\ref{sec good Z}).
Nevertheless, the bound does establish that the number of random features required for a constant number
of iterations depends on the statistical dimension, which is always smaller than the number of training points.
Since the kernel matrices often display quick decay in eigenvalues, it can be substantially smaller.
In particular, the number of random features can be $o(n)$
when the statistical dimension is $o(n^{1/2})$.
A discussion on how the statistical dimension
behaves with regard to the training size is outside the scope of this paper. We refer the reader to a recent
discussion by Bach on the subject~\cite{Bach13}.


Before proving the theorem we state some auxiliary definitions and lemmas.
\begin{defn}
  Let $\matA \in \R^{m \times n}$ with $n \geq m$ and let $\lambda \geq 0$. $\matA = \matL \matQ$ is a $\lambda$-LQ factorization of $\matA$ if $\matQ$ is full rank, $\matL$ is lower triangular and $\matL \matL^\T = \matA \matA^\T + \lambda \matI_m$.
\end{defn}

A $\lambda$-LQ factorization always exists, and $\matL$ is invertible for $\lambda > 0$. $\matQ$ has orthonormal rows for $\lambda = 0$. The following proposition refines the last statement a bit.

\begin{prop}
  \label{prop:id}
  If $\matA = \matL \matQ$ is an $\lambda$-LQ factorization $\matA\in\R^{m\times n}$, then $\matQ \matQ^\T + \lambda \matL^{-1} \matL^{-\T} = \matI_m$.
\end{prop}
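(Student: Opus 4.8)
The plan is to eliminate $\matA$ from the two defining properties of the factorization and reduce the claim to a single matrix identity obtained by one conjugation. By the definition of a $\lambda$-LQ factorization we have simultaneously $\matA = \matL\matQ$ and $\matL\matL^\T = \matA\matA^\T + \lambda\matI_m$. Substituting the first relation into the second immediately yields $\matL\matL^\T = \matL\matQ\matQ^\T\matL^\T + \lambda\matI_m$, and this single equation already encodes everything we need; the rest is rearrangement.

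The next step is to isolate the $\lambda\matI_m$ term and factor $\matL$ out on the left and $\matL^\T$ on the right. Bringing the product term to the left-hand side gives $\matL(\matI_m - \matQ\matQ^\T)\matL^\T = \lambda\matI_m$. Since $\lambda > 0$, the matrix $\matL$ is invertible (as noted immediately after the definition), so I may multiply on the left by $\matL^{-1}$ and on the right by $\matL^{-\T}$. This produces $\matI_m - \matQ\matQ^\T = \lambda\matL^{-1}\matL^{-\T}$, which rearranges exactly into the claimed identity $\matQ\matQ^\T + \lambda\matL^{-1}\matL^{-\T} = \matI_m$.

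There is essentially no hard part: the result follows from a direct substitution followed by a single conjugation by $\matL^{-1}$. The only point requiring care is the invertibility of $\matL$, which legitimizes that final conjugation; I would note explicitly that this holds whenever $\lambda > 0$, since then $\matL\matL^\T = \matA\matA^\T + \lambda\matI_m \succeq \lambda\matI_m \succ 0$ forces $\matL$ to be nonsingular. With that justification in place, the passage from the factored identity to the conclusion is immediate.
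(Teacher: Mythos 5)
Your proof is correct and follows essentially the same route as the paper's: both reduce the claim to the defining identity $\matL\matL^\T = \matA\matA^\T + \lambda\matI_m$ and conjugate by $\matL^{-1}$ and $\matL^{-\T}$, with your version merely substituting $\matA = \matL\matQ$ before the conjugation rather than after. Your explicit justification that $\lambda > 0$ forces $\matL$ to be nonsingular is a welcome (if minor) addition that the paper relegates to a remark following the definition.
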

\begin{proof}
  Multiply $\matL \matL^\T = \matA \matA^\T + \lambda \matI_m$ from the left by $\matL^{-1}$ and from the right by $\matL^{-\T}$ to obtain the equality.
\end{proof}

\begin{prop}
  \label{prop:stat_dim}
  If $\matA = \matL \matQ$ is a $\lambda$-LQ factorization of $\matA\in\R^{m\times n}$, then
  \[
  \FNormS{\matQ}=s_\lambda(\matA \matA^\T)\equiv \Trace{(\matA \matA^\T + \lambda I_m)^{-1} \matA\matA^\T}.
  \]
\end{prop}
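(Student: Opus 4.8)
The plan is to reduce the claim to the identity already furnished by Proposition~\ref{prop:id}, and then to match the two resulting trace expressions by the elementary rewriting $\matA\matA^\T = (\matA\matA^\T + \lambda\matI_m) - \lambda\matI_m$. Since both sides of the asserted equality are ultimately traces of functions of the single matrix $\matA\matA^\T$, I expect the entire argument to be a short chain of identities with no genuine obstacle; the only thing to be careful about is cleanly converting Frobenius norms into traces and invoking the cyclic/linearity properties of the trace.

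First I would note that $\FNormS{\matQ} = \Trace{\matQ\matQ^\T}$ and then apply Proposition~\ref{prop:id}, which gives $\matQ\matQ^\T = \matI_m - \lambda\matL^{-1}\matL^{-\T}$. Because $\matL\matL^\T = \matA\matA^\T + \lambda\matI_m$ by the definition of a $\lambda$-LQ factorization (and $\matL$ is invertible for $\lambda>0$), I would rewrite $\matL^{-1}\matL^{-\T} = (\matL\matL^\T)^{-1} = (\matA\matA^\T + \lambda\matI_m)^{-1}$. Taking traces and using linearity then yields
\begin{equation*}
\FNormS{\matQ} = \Trace{\matI_m} - \lambda\Trace{(\matA\matA^\T + \lambda\matI_m)^{-1}} = m - \lambda\Trace{(\matA\matA^\T + \lambda\matI_m)^{-1}}\,.
\end{equation*}

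The remaining step is to show that $s_\lambda(\matA\matA^\T)$ equals the same quantity. Here I would substitute $\matA\matA^\T = (\matA\matA^\T + \lambda\matI_m) - \lambda\matI_m$ inside the definition, so that $(\matA\matA^\T + \lambda\matI_m)^{-1}\matA\matA^\T = \matI_m - \lambda(\matA\matA^\T + \lambda\matI_m)^{-1}$, and take the trace to obtain $s_\lambda(\matA\matA^\T) = m - \lambda\Trace{(\matA\matA^\T + \lambda\matI_m)^{-1}}$. Comparing this with the expression derived for $\FNormS{\matQ}$ finishes the proof. The slightly delicate point, if any, is simply ensuring that the inverse $(\matA\matA^\T + \lambda\matI_m)^{-1}$ exists, which is guaranteed since $\lambda>0$ makes $\matA\matA^\T + \lambda\matI_m$ positive definite; everything else is routine trace manipulation.
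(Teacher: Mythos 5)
Your proof is correct and shares the paper's skeleton: both start from Proposition~\ref{prop:id} to write $\matQ\matQ^\T = \matI_m - \lambda\matL^{-1}\matL^{-\T}$ and reduce $\FNormS{\matQ}$ to $m - \lambda\Trace{(\matA\matA^\T+\lambda\matI_m)^{-1}}$. Where you diverge is the closing step: the paper introduces the singular values $\sigma_1,\dots,\sigma_m$ of $\matA$ and checks, term by term, that $m - \sum_{i=1}^m \lambda/(\sigma_i^2+\lambda) = \sum_{i=1}^m \sigma_i^2/(\sigma_i^2+\lambda) = \Trace{(\matA\matA^\T+\lambda\matI_m)^{-1}\matA\matA^\T}$, whereas you use the resolvent identity $(\matA\matA^\T+\lambda\matI_m)^{-1}\matA\matA^\T = \matI_m - \lambda(\matA\matA^\T+\lambda\matI_m)^{-1}$, obtained by substituting $\matA\matA^\T = (\matA\matA^\T+\lambda\matI_m) - \lambda\matI_m$. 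Your variant is marginally cleaner: it needs no spectral decomposition and applies verbatim with any PSD matrix in place of $\matA\matA^\T$, while the paper's spectral computation has the side benefit of exhibiting the statistical dimension explicitly as $\sum_i \sigma_i^2/(\sigma_i^2+\lambda)$, a formula that informs the discussion of eigenvalue decay elsewhere in the paper. One small repair to your write-up: the intermediate claim $\matL^{-1}\matL^{-\T} = (\matL\matL^\T)^{-1}$ is false as a matrix identity, since the left-hand side equals $(\matL^\T\matL)^{-1}$ while $(\matL\matL^\T)^{-1} = \matL^{-\T}\matL^{-1}$, and these differ for non-normal $\matL$. Because you only use the expression under a trace, cyclicity rescues the step --- $\Trace{\matL^{-1}\matL^{-\T}} = \Trace{\matL^{-\T}\matL^{-1}} = \Trace{(\matL\matL^\T)^{-1}} = \Trace{(\matA\matA^\T+\lambda\matI_m)^{-1}}$ --- so the conclusion stands, but you should state it that way rather than as a matrix equality; the paper's own proof makes the same silent appeal to cyclicity in passing from $\Trace{\matL^{-1}\matL^{-\T}}$ to the trace of the inverse.
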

\begin{proof}
  Let $\sigma_1,\dots,\sigma_m$ be the singular values of $\matA$.
  \Ken{Changed from $\sigma_1^2$ etc.}
  \begin{eqnarray*}
    \FNormS{\matQ} = \Trace{\matQ \matQ^\T} & = & \Trace{\matI_m  - \lambda \matL^{-1} \matL^{-\T}} \\
    & = & m - \lambda \Trace{\matL^{-1} \matL^{-\T}} \\
    & = & m - \lambda \Trace{(\matA \matA^\T + \matI_m)^{-1}} \\
    & = & m - \sum^{m}_{i=1}\frac{\lambda}{\sigma^2_i + \lambda} \\
    & = & \sum^{m}_{i=1}\frac{\sigma^2_i}{\sigma^2_i + \lambda} \\
    & = &  \Trace{(\matA \matA^\T + \lambda I_m)^{-1}  \matA\matA^\T}
  \end{eqnarray*}
\end{proof}

In addition, we need the following lemma.
\begin{lem}[\cite{ANW14}]
\label{lem:ts_ose}
Let $\matS\in \R^{s\times d^q}$ be a {\sc TensorSketch} matrix, and suppose that $\matA$ and $\matB$ are matrices
with $d^q$ columns. For $s \geq (2 + 3^q)/(\nu^2 \delta)$ we have
\begin{equation*}
\Pr[\FNormS{\matA \matS^\T \matS \matB^\T - \matA \matB^\T} \le \nu^2 \FNormS{\matA}\FNormS{\matB}] \ge 1-\delta\,.
\end{equation*}
\end{lem}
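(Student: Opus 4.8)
The plan is to prove this as a standard approximate-matrix-multiplication bound in two stages: first establish a second-moment (variance) estimate for the scalar inner-product estimator $\u^\T \matS^\T \matS \v$, and then bootstrap to the Frobenius norm through linearity of expectation and Markov's inequality. Writing $\u_1,\u_2,\dots$ for the rows of $\matA$ and $\v_1,\v_2,\dots$ for the rows of $\matB$ (each viewed as a vector in $\R^{d^q}$), the error matrix has $(i,j)$ entry $\u_i^\T\matS^\T\matS\v_j - \u_i^\T\v_j$, so that summing over all row pairs gives
\begin{equation*}
\Expect{\FNormS{\matA\matS^\T\matS\matB^\T - \matA\matB^\T}} = \sum_{i,j}\Expect{(\u_i^\T\matS^\T\matS\v_j - \u_i^\T\v_j)^2}.
\end{equation*}
Hence it suffices to prove, for arbitrary $\u,\v\in\R^{d^q}$, the per-entry variance bound $\Expect{(\u^\T\matS^\T\matS\v - \u^\T\v)^2} \le \tfrac{2+3^q}{s}\TNormS{\u}\TNormS{\v}$; summing then yields $\Expect{\FNormS{\cdot}} \le \tfrac{2+3^q}{s}\FNormS{\matA}\FNormS{\matB}$, and Markov's inequality combined with $s \ge (2+3^q)/(\nu^2\delta)$ forces the failure probability below $\delta$.

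The heart of the argument is therefore the per-entry variance bound. I would first write the estimator explicitly as $\u^\T\matS^\T\matS\v = \sum_{i,j\in[d]^q} u_i v_j\, G(i)G(j)\,\mathbf{1}[H(i)=H(j)]$ and exploit the independence of the sign function $G$ from the hash function $H$. For unbiasedness, the factorization $G(i)G(j) = \prod_{t=1}^q g_t(i_t)g_t(j_t)$ together with $2$-wise independence of each $g_t$ forces $\Expect{G(i)G(j)} = \mathbf{1}[i=j]$, so only the diagonal survives and $\Expect{\u^\T\matS^\T\matS\v} = \u^\T\v$.

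For the second moment I would expand $(\u^\T\matS^\T\matS\v)^2$ into a quadruple sum over $i,j,k,l\in[d]^q$ and separate the sign expectation $\Expect{G(i)G(j)G(k)G(l)} = \prod_{t}\Expect{g_t(i_t)g_t(j_t)g_t(k_t)g_t(l_t)}$ from the hash-collision expectation $\Expect{\mathbf{1}[H(i)=H(j)]\,\mathbf{1}[H(k)=H(l)]}$. Since each $g_t$ is $4$-wise independent, the coordinate-$t$ sign factor equals $1$ precisely when the values $i_t,j_t,k_t,l_t$ pair up (each distinct value occurs with even multiplicity) and vanishes otherwise; organizing the surviving quadruples by which of the three pairings is realized in each of the $q$ coordinates bounds them by $\le 3^q$ families. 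The unique family aligned as $\{ij\}\{kl\}$ in every coordinate forces $i=j$ and $k=l$ globally, makes both collision indicators trivially $1$, and contributes exactly $(\u^\T\v)^2$ — which cancels when I subtract $(\u^\T\v)^2$ to form the variance. Every other family forces a genuine collision $H(\cdot)=H(\cdot')$ between distinct multi-indices, whose probability is $1/s$; here the $3$-wise independence of the component hashes $h_t$ is exactly what controls both the pairwise collision probability $\Pr[H(i)=H(j)]=1/s$ and the joint collision probabilities for the (up to three) distinct multi-indices involved. Bounding each surviving index sum by $\TNormS{\u}\TNormS{\v}$ via Cauchy--Schwarz, and adding the lower-order corrections coming from coincident indices, produces the variance bound $\tfrac{2+3^q}{s}\TNormS{\u}\TNormS{\v}$.

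The main obstacle is precisely this second-moment bookkeeping: correctly enumerating the surviving coordinatewise pairing patterns, confirming that $4$-wise independence of the signs and $3$-wise independence of the hashes suffice to evaluate or bound each term, tracking how the degenerate (multiply-coincident) quadruples are handled by inclusion--exclusion, and verifying that the combination yields exactly the constant $2+3^q$ while the main term cancels cleanly against $(\u^\T\v)^2$. Everything else — the reduction via linearity of expectation and the concluding Markov step — is routine.
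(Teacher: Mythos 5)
The paper does not prove this lemma at all---it is imported verbatim from \cite{ANW14}---and your proposal is essentially the proof given in that cited source: the same reduction via linearity of expectation and Markov to a per-entry second-moment bound, with the sign expectation factoring across the $q$ coordinates, $4$-wise independence of the $g_t$ killing all but the (at most $3^q$) coordinatewise pairing patterns, $3$-wise independence of the $h_t$ (hence of $H$) supplying the $1/s$ collision factors for every pattern other than the $\{ij\}\{kl\}$-everywhere one that cancels against $(\u^\T\v)^2$, and Cauchy--Schwarz bounding each surviving sum. Your outline is correct, and the bookkeeping you flag as the main obstacle (inclusion--exclusion over degenerate quadruples and extracting the constant $2+3^q$) is precisely where the proof in \cite{ANW14} spends its effort.
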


We can now prove Theorem~\ref{thm:main}.
\begin{proof}[Proof of Theorem~\ref{thm:main}]
  We prove that with probability of at least $1-\delta$
  \begin{equation}
    \label{eq:goal}
  \frac{2}{3}(\matZ \matZ^\T + \lambda \matI_n) \preceq \matK + \lambda \matI_n \preceq 2(\matZ \matZ^\T + \lambda \matI_n)\,.
  \end{equation}
  Thus, with probability of $1-\delta$ the relevant condition number is bounded by $3$. For PCG, if the condition number
  is bounded by $\kappa$, we are guaranteed to reduce the error (measured in the matrix norm of the linear equation) to an
  $\epsilon$ fraction of the initial guess after $\lceil \sqrt{\kappa}\ln(2/\epsilon)/2 \rceil$
  iterations~\cite{Shewchuk94}. This immediately leads to the bound in the theorem statement.

  Let $\matV_q\in \R^{n\times d^q}$ be the matrix whose row $i$ corresponds to expanding $\x_i$
  to the values of all possible $q$-degree monomials\footnote{The letter $\matV$ alludes to the
    fact that $\matV_q$ can be thought of as a multivariate analogue of the Vandermonde matrix.},
i.e. $v_q(\x_i)$ in the terminology of Section~\ref{sec:rfm}. We
  have $\matK = \matV_q \matV^\T_q$. Furthermore, there exists a matrix $\matS \in \R^{s \times d^q}$
  such that $\matZ = \matV_q \matS^\T$, so~\eqref{eq:goal} translates to
  \begin{equation*}
  \frac{2}{3}(\matV_q \matS^\T \matS \matV^\T_q + \lambda \matI_n) \preceq \matV_q \matV^\T_q + \lambda \matI_n \preceq 2(\matV_q \matS^\T \matS \matV^\T_q + \lambda \matI_n)\,,
  \end{equation*}
  or equivalently,
  \begin{equation}
    \label{eq:goal1}
  \frac{1}{2}(\matV_q \matV^\T_q + \lambda \matI_n) \preceq \matV_q \matS^\T \matS \matV^\T_q + \lambda \matI_n \preceq \frac{3}{2}(\matV_q \matV^\T_q + \lambda \matI_n)\,.
  \end{equation}

  Let $\matV_q = \matL \matQ$ be a $\lambda$-LQ factorization of $\matV_q$.
  It is well known that for $\matC$ that is square and invertible, $\matA \preceq \matB$ if and only if
  $\matC^{-1} \matA \matC^{-\T} \preceq \matC^{-1} \matB \matC^{-\T}$. Applying this to the previous equation
  with $\matC = \matL$ implies that~\eqref{eq:goal1} holds if and only if
  \begin{equation}
    \label{eq:goal1.5}
  \frac{1}{2}\matI_n \preceq \matQ \matS^\T \matS \matQ^\T + \lambda \matL^{-1} \matL^{-\T} \preceq \frac{3}{2}\matI_n\,.
  \end{equation}

  A sufficient condition for~\eqref{eq:goal1.5} to hold is that
  \begin{equation}
    \label{eq:goal2}
    \TNorm{\matQ \matS^\T \matS \matQ^\T + \lambda \matL^{-1} \matL^{-\T} - \matI_n} \leq \frac{1}{2}\,.
  \end{equation}
  According to Proposition~\ref{prop:id} we have
  \begin{equation*}
  \TNorm{\matQ \matS^\T \matS \matQ^\T + \lambda \matL^{-1} \matL^{-\T} - \matI_n} = \TNorm{\matQ \matS^\T \matS \matQ^\T - \matQ \matQ^\T}\,.
  \end{equation*}
  According to Lemma~\ref{lem:ts_ose}, if $s \geq 4(2+3^q)\FNorm{\matQ}^4/\delta$,  then with probability of at least $1-\delta$ we have
  \begin{equation*}
  \TNorm{\matQ \matS^\T \matS \matQ^\T - \matQ \matQ^\T} \leq \FNorm{\matQ \matS^\T \matS \matQ - \matQ \matQ^\T} \leq \frac{1}{2}\,.
  \end{equation*}
  Now complete the proof using the equality
  $\FNormS{\matQ} = s_\lambda(\matV_q\matV_q^\T) = s_\lambda(\matK)$ (Proposition~\ref{prop:stat_dim}).
\end{proof}

\subsection{Other Kernels and Feature Maps}

Close inspection of the proof reveals that the crucial ingredient is the matrix multiplication lemma
(Lemma~\ref{lem:ts_ose}). In the following, we generalize Theorem~\ref{thm:main} to feature maps
which have similar structural properties. The proof, which is mostly analogous to the proof
of Theorem~\ref{thm:main}, is included in the Appendix.

In the following, for finite ordered sets ${\cal U}, {\cal V} \subset {\cal H}_k$ we denote by $\matK(\cal{U},\cal{V})$
the Gram matrix associated with this two sets, i.e. $\matK_{ij} = \dotprod{\u_i}{\v_j}{{\cal H}_k}$ for $\u_i \in \cal{U}$ and $\v_j \in \cal{V}$. We assume that the feature map defines a transformation from ${\cal H}_k$ to $\R^s$,
that is, a sample from the distribution is a  function $\varphi : {\cal H}_k \to \R^s$. A feature map is linear
if every sample $\varphi$ it can take is linear. \Ken{this last sentence is not clear to me} \Haim{Edited.}

\begin{defn}
  A linear feature map has an {\em approximate multiplication property} with $f(\nu, \delta)$ if
  $\varphi$ with at least $f(\nu, \xi, \delta)$ random features has that for all finite ordered
  sets ${\cal U}, {\cal V} \subset {\cal H}_k$ the following holds with probability of at least $1-\delta$:
  \begin{equation*}
  \FNormS{\matZ_{\cal U} \matZ^\T_{\cal V} - \matK({\cal U},{\cal V})} \le \nu^2 \Trace{\matK({\cal U},{\cal U})}\Trace{\matK({\cal V},{\cal V})} + \xi^2
  \end{equation*}
   where $\matZ_{\cal U}$ (resp. $\matZ_{\cal V}$) is the matrix whose row
  $i$ corresponds to applying $\varphi$ to $\u_i$ (resp. $\v_i$).
\end{defn}

\begin{thm}
  \label{thm:genmain}
  Suppose that $\varphi$ is a sample from a  feature map that has an approximate multiplication property with $f(\nu, \delta)$. Suppose that $\varphi$ has
  $s \geq  f(s_\lambda(\matK)^{-1}/2, 0, \delta)$ or  $s \geq  f(s_\lambda(\matK)^{-1}/2\sqrt{2}, 1/2\sqrt{2}, \delta)$ features.
  Let $\z_i = \varphi(k(\x_i, \cdot))\in \R^s, i=1,\dots,n$ and $\matZ \in \R^{n \times s}$ have rows $\z^\T_1,\dots,\z^\T_n$.
  With probability of at least $1-\delta$, after
  $$
  T = \left\lceil \frac{\sqrt{3}}{2} \ln(2/\epsilon) \right\rceil
  $$
  iterations of PCG on $\matK + \lambda \matI_n$ starting from the all-zeros vector with $\matZ \matZ^\T + \lambda \matI_n$ as a preconditioner we will find a $\tilde{\c}$ such that
  \begin{equation*}
  \XNorm{\tilde{\c} - \c}{\matK + \lambda \matI_n} \leq \epsilon \XNorm{\c}{\matK + \lambda \matI_n}\,.
  \end{equation*}
\end{thm}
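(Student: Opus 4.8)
The plan is to follow the proof of Theorem~\ref{thm:main} almost verbatim, replacing the explicit {\sc TensorSketch} matrix $\matS$ and its matrix-multiplication guarantee (Lemma~\ref{lem:ts_ose}) by the abstract approximate multiplication property, and replacing the concrete factorization $\matK=\matV_q\matV_q^\T$ by an abstract feature operator. Let $\matV$ denote the (possibly infinite-dimensional) operator whose $i$-th row is $k(\x_i,\cdot)\in\H_k$, so that by the reproducing property $\matK=\matV\matV^\T$ in the sense that $(\matV\matV^\T)_{ij}=\dotprod{k(\x_i,\cdot)}{k(\x_j,\cdot)}{\H_k}=k(\x_i,\x_j)$. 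Exactly as before, the goal is to establish the sandwich~\eqref{eq:goal}, namely that with probability at least $1-\delta$
\[
\frac{2}{3}(\matZ\matZ^\T+\lambda\matI_n)\preceq\matK+\lambda\matI_n\preceq 2(\matZ\matZ^\T+\lambda\matI_n),
\]
since this bounds the relevant condition number by $3$, whence the stated iteration count follows from the same PCG convergence estimate.

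Next I would introduce a $\lambda$-LQ factorization $\matV=\matL\matQ$; this is well-defined even when $\H_k$ is infinite-dimensional by taking the Cholesky factor $\matL\matL^\T=\matK+\lambda\matI_n$ and setting $\matQ=\matL^{-1}\matV$, and Propositions~\ref{prop:id} and~\ref{prop:stat_dim} depend only on $\matK$ and $\matL$ and so carry over. The crucial step is to exploit the \emph{linearity} of $\varphi$. Letting ${\cal Q}$ be the ordered set of rows of $\matQ$, each such row is a linear combination $\sum_j(\matL^{-1})_{ij}\,k(\x_j,\cdot)$, so applying $\varphi$ term by term gives $\matZ_{\cal Q}=\matL^{-1}\matZ$, and moreover $\matK({\cal Q},{\cal Q})=\matQ\matQ^\T$. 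Transforming~\eqref{eq:goal} by $\matC=\matL$ as in the earlier proof reduces the claim to
\[
\TNorm{\matL^{-1}\matZ\matZ^\T\matL^{-\T}+\lambda\matL^{-1}\matL^{-\T}-\matI_n}\leq\frac{1}{2},
\]
and Proposition~\ref{prop:id} lets me cancel the $\lambda\matL^{-1}\matL^{-\T}$ term against $\matI_n$, leaving $\TNorm{\matZ_{\cal Q}\matZ_{\cal Q}^\T-\matK({\cal Q},{\cal Q})}$.

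At this point I would invoke the approximate multiplication property with ${\cal U}={\cal V}={\cal Q}$, bounding the spectral norm by the Frobenius norm to get
\[
\TNorm{\matZ_{\cal Q}\matZ_{\cal Q}^\T-\matK({\cal Q},{\cal Q})}\leq\sqrt{\nu^2\Trace{\matK({\cal Q},{\cal Q})}^2+\xi^2}=\sqrt{\nu^2 s_\lambda(\matK)^2+\xi^2},
\]
where the final equality uses Proposition~\ref{prop:stat_dim} together with $\FNormS{\matQ}=s_\lambda(\matV\matV^\T)=s_\lambda(\matK)$. It then remains to check that each of the two stated feature-count hypotheses drives this quantity to exactly $1/2$: with $(\nu,\xi)=(s_\lambda(\matK)^{-1}/2,\,0)$ the bound is $\sqrt{1/4}$, and with $(\nu,\xi)=(s_\lambda(\matK)^{-1}/2\sqrt{2},\,1/2\sqrt{2})$ it is $\sqrt{1/8+1/8}$, both equal to $1/2$, which yields the desired condition number of $3$.

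I expect the main obstacle to be the conceptual bookkeeping in the RKHS setting, where $\matV$ is genuinely an operator rather than a finite matrix, so that the $\lambda$-LQ factorization and the two propositions must be reinterpreted (all the quantities that actually appear, $\matK=\matV\matV^\T$, $\matL$, and $\matQ\matQ^\T=\matL^{-1}\matK\matL^{-\T}$, are finite, which is what makes this legitimate). The identity $\matZ_{\cal Q}=\matL^{-1}\matZ$, which transports the linearity of $\varphi$ through the factorization and connects the approximate multiplication property to the preconditioner, is the heart of the argument; everything downstream is the same elementary algebra as in Theorem~\ref{thm:main}.
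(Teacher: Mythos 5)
Your proposal is correct and takes essentially the same route as the paper's appendix proof: the paper likewise takes the Cholesky factor $\matL\matL^\T = \matK + \lambda\matI_n$, defines ${\cal Q}$ by $\q_i = \sum^n_{j=1}(\matL^{-1})_{ij}k(\x_j,\cdot)$, uses linearity of $\varphi$ to get $\matZ_{\cal Q} = \matL^{-1}\matZ$, reduces via congruence by $\matL$ and the identity $\matK({\cal Q},{\cal Q}) + \lambda\matL^{-1}\matL^{-\T} = \matI_n$ to bounding $\TNorm{\matZ_{\cal Q}\matZ^\T_{\cal Q} - \matK({\cal Q},{\cal Q})}$ by $1/2$, and instantiates the approximate multiplication property with the same two parameter choices together with $\Trace{\matK({\cal Q},{\cal Q})} = s_\lambda(\matK)$. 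The only cosmetic difference is that the paper states RKHS versions of the two propositions directly (Propositions~\ref{prop:genid} and~\ref{prop:genstat_dim}) rather than reinterpreting the $\lambda$-LQ factorization for an operator-valued $\matV$, which is precisely the bookkeeping reinterpretation you anticipated.
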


Currently, there is no proof that the approximate multiplication property holds for any feature map except
for {\sc TensorSketch}.

\section{Multiple Level Sketching}

We now show that the dependence on the statistical dimension can be improved by composing
multiple sketching transforms. The crucial observation is that after the initial random feature
transform, the training set is embedded in a Euclidean space. This suggests the composition of
well-known transforms such as the Subsampled Randomized Hadamard Transform (SRHT) and the Johnson-Lindenstrauss
transform with the initial random feature map. A similar idea, referred to as {\em compact random
features}, was explored in the context of the random features method by Hamid et al.~\cite{HXGD14}.

First, we consider the use of the SRHT after the initial {\sc TensorSketch}
for the polynomial kernel. To that end we recall the definition of the SRHT.
Let $m$ be a power of 2. The $m \times m$ matrix of the
Walsh-Hadamard Transform (WHT) is defined recursively as,
\begin{equation*}
 \matH_m = \left[
\begin{array}{cc}
  \matH_{m/2} &  \matH_{m/2} \\
  \matH_{m/2} & -\matH_{m/2}
\end{array}\right],
\ \mbox{with} \
\matH_2 = \left[
\begin{array}{cc}
  +1 & +1 \\
  +1 & -1
\end{array}\right].
\end{equation*}
\begin{defn}
\label{def:srht}
Let $m$ be some integer which is a power of 2, and $s$ an integer. A \emph{Subsampled Randomized Walsh-Hadamard Transform (SRHT)} is an $s \times m$ matrix of the form
\begin{equation*}
\matS =  \frac{1}{\sqrt{s}} \matP \matH  \matD\,
\end{equation*}
where $\matD$ is a random diagonal matrix of size $m$ whose entries are independent random signs,
and $\matH$ is a Walsh-Hadamard matrix of size $m$, and $\matP$ is a random sampling matrix.
\end{defn}
The recursive nature of the WHT matrix allows for a quick multiplication of a SRHT matrix by a vector.
In particular, if $\matS \in \R^{s\times m}$ is an SRHT, then $\matS \x$ can be computed in $O(m \log(s))$~\cite{AilonLiberty08}.

The proposed algorithm proceeds as follows. First, we apply a random feature transform with $s_1$ features to
obtain $\matZ_1 \in \R^{n\times s_1}$. We now apply a SRHT $\matS_2 \in \R^{s_2 \times s_1}$ to the rows
of $\matZ_1$, that is compute $\matZ_2 = \matZ_1 \matS^\T_2$, and use $\matZ_2\matZ^\T_2 + \lambda \matI_n$
as a preconditioner for $\matK + \lambda \matI_n$.

The following theorem establishes that  using this scheme it is possible for the polynomial kernel
to construct a good preconditioner with only $O(s_\lambda(\matK) \log(s_\lambda(\matK)))$ columns
in $\matZ_2$.
\begin{thm}
  \label{thm:main2}
  Let $\matK$ be the kernel matrix associated with the $q$-degree homogeneous polynomial kernel
  $k(\x,\z)=(\x^\T \z)^q$. Let $s_\lambda(\matK)\equiv\Trace{(\matK + \lambda \matI_n)^{-1} \matK}$.
  Assume that $\TNorm{\matK} \geq \lambda$.
  Let $\varphi : \R^d \to \R^{s_1}$ be a {\sc TensorSketch} mapping with
  \begin{equation*}
  s_1 \geq 32(2+3^q)s_\lambda(\matK)^2/\delta\,,
  \end{equation*}
  and $\matS_2 \in \R^{s_2 \times s_1}$ be a SRHT with $s_2 = \Omega(s_\lambda(\matK) \log(\s_\lambda(\matK))/\delta^2)$.
  Let $\z_i = \matS_2 \varphi(\x_i)\in \R^s, i=1,\dots,n$ and $\matZ_2 \in \R^{n \times s}$
  have rows $\z^\T_1,\dots,\z^\T_n$.
  Then, with probability of at least $1-\delta$,
 after
  $$
  T = \left\lceil \frac{\sqrt{3}}{2} \ln(2/\epsilon) \right\rceil
  $$
  iterations of PCG on $\matK + \lambda \matI_n$ starting from the all-zeros vector with $\matZ \matZ^\T + \lambda \matI_n$ as a preconditioner
  we will find a $\tilde{\c}$ such that
  \begin{equation}\label{eq:bound2}
  \XNorm{\tilde{\c} - \c}{\matK + \lambda \matI_n} \leq \epsilon \XNorm{\c}{\matK + \lambda \matI_n}\,.
  \end{equation}
  In the above, $\c$ is the exact solution of the linear equation at hand (Equation~\ref{eq:krr}).
\end{thm}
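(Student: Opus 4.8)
The plan is to mimic the proof of Theorem~\ref{thm:main}, treating the composition $\matS \equiv \matS_2 \matS_1$ as a single sketch and departing from that argument only when bounding its error. Writing $\matZ_2 = \matV_q \matS^\T$ and taking a $\lambda$-LQ factorization $\matV_q = \matL\matQ$, the exact chain of equivalences used for Theorem~\ref{thm:main} (conjugation by $\matL$, Proposition~\ref{prop:id}, and $\FNormS{\matQ} = s_\lambda(\matK)$ from Proposition~\ref{prop:stat_dim}) shows that the condition-number bound of $3$, and hence the stated iteration count, follows once we establish
\[
\TNorm{\matQ \matS_1^\T \matS_2^\T \matS_2 \matS_1 \matQ^\T - \matQ\matQ^\T} \le \frac{1}{2}\,.
\]

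I would split this error by inserting the intermediate {\sc TensorSketch}-only product and using the triangle inequality:
\[
\TNorm{\matQ \matS_1^\T \matS_2^\T \matS_2 \matS_1 \matQ^\T - \matQ\matQ^\T}
\le
\TNorm{\mat W^\T \matS_2^\T \matS_2 \mat W - \mat W^\T\mat W}
+ \TNorm{\matQ \matS_1^\T \matS_1 \matQ^\T - \matQ\matQ^\T}\,,
\]
where $\mat W \equiv \matS_1 \matQ^\T$. The second term is handled exactly as in Theorem~\ref{thm:main}: applying Lemma~\ref{lem:ts_ose} with $\nu^2 = 1/(16\,\FNorm{\matQ}^4)$ and failure probability $\delta/2$ bounds the Frobenius norm (hence the spectral norm) of $\matQ\matS_1^\T\matS_1\matQ^\T - \matQ\matQ^\T$ by $1/4$; solving $s_1 \ge (2+3^q)/(\nu^2(\delta/2))$ reproduces the stated requirement $s_1 \ge 32(2+3^q)s_\lambda(\matK)^2/\delta$.

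It remains to bound the first term by $1/4$, which is where the SRHT does its work. Here I would invoke a spectral-norm approximate matrix multiplication guarantee for the SRHT whose sampling complexity scales with the \emph{stable rank} $\FNormS{\mat W}/\TNormS{\mat W}$ rather than with $s_1$ or $n$: taking $s_2 = \Omega(r \log r / \delta^2)$ SRHT rows, where $r$ upper-bounds the stable rank of $\mat W$, yields $\TNorm{\mat W^\T\matS_2^\T\matS_2\mat W - \mat W^\T\mat W} \le \eta\,\TNormS{\mat W}$ with failure probability $\delta/2$, for a fixed absolute constant $\eta$. The point is that, conditioned on the event of the previous paragraph, the stable rank of $\mat W$ is $O(s_\lambda(\matK))$: its numerator $\FNormS{\mat W} = \Trace{\matQ\matS_1^\T\matS_1\matQ^\T}$ concentrates around $\Trace{\matQ\matQ^\T} = s_\lambda(\matK)$, while the denominator $\TNormS{\mat W} = \TNorm{\matQ\matS_1^\T\matS_1\matQ^\T} \ge \TNorm{\matQ\matQ^\T} - 1/4$ is bounded below by a constant. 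This lower bound is precisely where the hypothesis $\TNorm{\matK}\ge\lambda$ enters: since the eigenvalues of $\matQ\matQ^\T$ are $\sigma_i^2/(\sigma_i^2+\lambda)$, we have $\TNorm{\matQ\matQ^\T} = \TNorm{\matK}/(\TNorm{\matK}+\lambda) \ge 1/2$, so $\TNormS{\mat W} \ge 1/4$. Thus $r = O(s_\lambda(\matK))$ and $s_2 = \Omega(s_\lambda(\matK)\log s_\lambda(\matK)/\delta^2)$ forces $\eta\,\TNormS{\mat W} \le 1/4$. A union bound over the two events gives combined error $\le 1/2$ with probability $\ge 1-\delta$, completing the proof.

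The main obstacle I anticipate is this first (SRHT) term: one must pick a spectral-norm matrix-product result for the SRHT whose row count is governed by stable rank and convert its (typically relative) guarantee into the absolute bound $\le 1/4$, which is exactly what forces the $\TNorm{\matK}\ge\lambda$ assumption so that $\TNormS{\mat W}$ stays bounded away from zero. A secondary technical point is rigorously controlling the numerator $\Trace{\matQ\matS_1^\T\matS_1\matQ^\T}$: the Frobenius bound from Lemma~\ref{lem:ts_ose} does not directly control a trace, so a short separate concentration argument (or an isotropy/second-moment property of {\sc TensorSketch}, noting that $\Expect{\matS_1^\T\matS_1} = \matI$) is needed to certify that the stable rank of $\mat W$ is indeed $O(s_\lambda(\matK))$.
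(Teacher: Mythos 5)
Your proposal is correct and follows essentially the same route as the paper's proof: the same triangle-inequality split around the intermediate {\sc TensorSketch}-only product, Lemma~\ref{lem:ts_ose} with matching constants for the second term, and the Cohen et al.\ stable-rank SRHT spectral-norm guarantee for the first, with the stable rank of $\matS_1\matQ^\T$ controlled exactly as in the paper (numerator via $\Expect{\FNormS{\matS_1\matQ^\T}}=\FNormS{\matQ^\T}$ and Markov, denominator via $\TNorm{\matK}\ge\lambda$ giving $\TNormS{\matQ^\T}\ge 1/2$). Even the two obstacles you flag are resolved in the paper precisely as you propose, and your explicit lower bound $\TNormS{\matS_1\matQ^\T}\ge 1/4$ is if anything slightly more careful than the paper's implicit version of that step.
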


\begin{proof}
  Let $\matV_q$ and $\matQ$ be as defined in the proof of Theorem~\ref{thm:main}.
  Let $\matS_1$ be the matrix that corresponds to $\varphi$,
  and let $\matS = \matS_2 \matS_1$. We have $\matZ = \matV_q \matS^\T$. Following the proof of Theorem~\ref{thm:main}
  it suffices to prove that
  \begin{equation*}
  \TNorm{\matQ \matS^\T \matS \matQ^\T - \matQ \matQ^\T} \leq \frac{1}{2}\,.
  \end{equation*}

  Noticing that
  \begin{equation*}
    \TNorm{\matQ \matS^\T \matS \matQ^\T - \matQ \matQ^\T} \leq \TNorm{\matQ \matS_1^\T \matS^\T_2\matS_2 \matS_1 \matQ^\T - \matQ \matS_1^\T \matS_1 \matQ^\T} + \TNorm{\matQ \matS_1^\T \matS_1 \matQ^\T - \matQ \matQ^\T}
  \end{equation*}
  it suffices to prove that each of the two terms is bounded by $\frac{1}{4}$ with probability $1-\delta/2$.
  Due to the lower bound on $s_1$ this holds for the right term as explained in the proof of Theorem~\ref{thm:main}.

  For the left term, we used recent results by Cohen et al.~\cite{CNW15} that show that for a
  matrix $\matA$ and a SRHT $\Pi$ with $O(\tilde{r}(\matA)\log(\tilde{r}(\matA))/\nu^2)$ rows,
  where $\tilde{r}(\matA)\equiv \FNormS{\matA} / \TNormS{\matA}$ is the {\em stable rank} of $\mat{A}$,
  we have
  \begin{equation*}
  \TNorm{\matA^\T \Pi^\T \Pi \matA - \matA^\T \matA} \leq \nu \TNormS{\matA}
  \end{equation*}
  with high probability.

  We bound the left term by applying this result to $\matA = \matS_1 \matQ^\T$. First, we note that
  $\TNormS{\matQ^\T}=\TNorm{\matK}/(\TNorm{\matK} + \lambda)\geq 1/2$. Next, we note that
  \begin{eqnarray*}
    \TNorm{ \matS_1\matQ^\T} & = &\TNorm{\matQ \matS^\T_1\matS_1\matQ^\T}^{1/2}\\
    & \leq &\TNorm{\matQ^\T} + \TNorm{\matQ \matS^\T_1\matS_1\matQ^\T - \matQ \matQ^\T}^{1/2}\,.
  \end{eqnarray*}
  The second term has already been proven to be $O(1)$, while the first is bounded by one.
  Therefore, we conclude that $\TNorm{\matS_1\matQ^\T} = O(1)$ so  $O(\tilde{r}(\matS_1\matQ^\T)\log(\tilde{r}(\matS_1\matQ^\T)))$
  row is sufficient. In addition, since $\Expect{\FNormS{\matS_1\matQ^\T}}=\FNormS{\matQ^\T}$ we
  conclude that with high probability $\FNormS{\matS_1\matQ^\T} = O(\FNormS{\matQ^\T}) = O(s_\lambda(\matK))$. Finally, recall
  that $\TNormS{\matQ^T}\geq 1/2$ so $\tilde{r}(\matS_1\matQ^\T) = O(s_\lambda(\matK))$.
\end{proof}

\begin{rem*}
Notice that for multiple level sketching we require that $\TNorm{\matK} \geq \lambda$. This assumption
is very reasonable: from a statistical point of view it should be the case that $\lambda \ll \TNorm{\matK}$
(otherwise, the regularizer dominates the data). From a computational point of view, if
$\lambda = \Omega(\TNorm{\matK})$ then the condition number of $\matK + \lambda \matI_n$ is $O(1)$ to begin
with (although the constant might be huge). In particular, if $\lambda \geq \TNorm{\matK}$ then the
condition number is bounded by $2$.
\end{rem*}

\begin{rem*}
The dependence on $\delta$ in the previous theorem (and also in Theorem~\ref{thm:main}) can be improved to $O(\poly{\log(1/\delta)})$
by considering a fixed failure probability and repeating the algorithm $O(\log(1/\delta))$ times.
If $\tilde{\c}_1, \ldots, \tilde{\c}_r$ are the candidate solutions, for $r = O(\log(1/\delta))$, then a sufficiently good solution can be found by looking at the differences $\XNorm{\tilde{\c}_i - \tilde{\c}_j}{\matK + \lambda \matI_n}$ for all $i$ and $j$, and outputting an $i$ for which, if we sort these differences for all $j$, the median value of the sorted list for $i$ is the smallest. The analysis is based on Chernoff bounds and the triangle inequality, and requires adjusting $\epsilon$ by a constant factor. We omit the details.
\end{rem*}

From a computational complexity point of view, in many cases one can set $s_1$ to be very large without increasing
the asymptotic cost of the algorithm. For example, for random Fourier features, if $\sum^n_{i=1}\nnz{\x_i} = O(n\log{s_2})$
then it is possible to set $s_1 = \Theta(n)$ without increasing the asymptotic cost of the algorithm.

\begin{algorithm}[t]
\begin{algorithmic}[1]

  \STATE \textbf{Input: }Data $(\x_1,y_1),\dots, (\x_n,y_n)\in \R^d \times \R$, kernel $k(\cdot, \cdot)$, feature map
  generating algorithm $T$, $\lambda > 0$, $s_3 < s_2 < s_1 < n$, accuracy parameter $\epsilon > 0$.

  \STATE \Ken{This line intentionally left blank} \Haim{yes}

  \STATE Compute kernel matrix $\matK \in \R^{n \times n}$.

  \STATE Using $T$, create a feature map $\varphi : \R^d \to \R^{s_1}$.

  \STATE Compute $\z_i = \varphi(\x_i), i=1,\dots,n$ and stack them in a
  a matrix $\matZ_1$.

  \STATE Apply a subsampled randomized Hadamard transform $\matS_2 \in \R^{s_2 \times s_1}$
  to the rows of $\matZ_1$ to form $\matZ_2 = \matZ_1 \matS^\T_2$.

  \STATE Let $\matS_3 \in \R^{s_3 \times s_2}$ be a random matrix whose entries independent standard
  normal random variables. Compute $\matZ_3 = \frac{1}{\sqrt{s_3}}\matZ_2 \matS^\T_3$.

  \STATE Solve $\matL^{\T} \matU = \matZ^\T$ for $\matU$.

  \STATE Stack $y_1,\dots,y_n$ in a vector $\y\in\R^n$.

  \STATE Solve $(\matK + \lambda \matI_n)\c = \y$ using PCG to accuracy $\epsilon$ with
  $\matZ \matZ^\T + \lambda \matI_n$ as a preconditioner. In each iteration, in order to apply
  the preconditioner to some $\x$, compute $\lambda^{-1}(\x - \matU^\T \matU \x)$.

  \STATE

  \STATE {\bf return} $\tilde{\c}$

\end{algorithmic}

\protect\caption{\label{alg:x2fastkrr}Multiple Level Sketching Algorithm.}
\end{algorithm}

It is also possible to further reduce the dimension to $O(s_\lambda(\matK))$  using a dense random projection,
i.e. multiplying
$\matZ_2$ from the right by a scaled random matrix with subgaussian entries. We omit the proof as it is analogous to the
proof of Theorem~\ref{thm:main2}. Pseudo-code description of the three-level sketching algorithm is given as
Algorithm~\ref{alg:x2fastkrr}.

\section{Adaptively Setting the Sketch Size}\label{sec good Z}


It is also possible to adaptively set $s$. The basic idea is to successively form larger $\matZ$'s until we have one that is good enough.
The following theorem implies quickly testable conditions for this, again for the polynomial kernel and {\sc TensorSketch}.
\begin{thm}
  \label{thm:testprecond}
Let $\matP\in\R^{n\times n}$ be the orthogonal projection matrix on the subspace spanned by the eigenvectors
of $\matZ \matZ^\T$  whose corresponding eigenvalues are bigger than $0.05 \lambda$,
and let $\matV_q$ have $\matK = \matV_q\matV_q^\T$.
Suppose that:
\begin{enumerate}
\item $\TNorm{(\matI_n - \matP) \matK (\matI_n - \matP)} \leq 0.1\lambda$.
\item For all $\x$, $\TNormS{\matV_q^\T \matP \x} = (1 \pm 0.1) \TNormS{\matZ^\T \matP \x}$.
\end{enumerate}
Then the guarantees of Theorem~\ref{thm:main} hold.
Moreover, if $s \geq C s_\lambda(\matK)^2/\delta $ for some sufficiently large constant $C$,
then with probability of at least $1-\delta$ conditions 1 and 2 above hold as well.
\end{thm}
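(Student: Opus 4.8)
The plan is to prove both directions by reducing everything to a single spectral equivalence between $\matB\equiv\matK+\lambda\matI_n$ and the preconditioner $\matM\equiv\matZ\matZ^\T+\lambda\matI_n$, exactly as in the proof of Theorem~\ref{thm:main}: once I establish $\tfrac12\matM\preceq\matB\preceq\tfrac32\matM$ the relevant condition number is at most $3$ and the PCG convergence statement of Theorem~\ref{thm:main} applies verbatim. Write $\matA\equiv\matZ\matZ^\T$, so $\matM=\matA+\lambda\matI_n$, and observe that $\matP$ is a spectral projection of $\matA$, hence commutes with $\matA$ and $\matM$. Decomposing $\R^n=\range{\matP}\oplus\range{(\matI_n-\matP)}$, both $\matA$ and $\matM$ are block diagonal, the $\matP$-block $\matA_{11}$ of $\matA$ satisfies $\matA_{11}\succ 0.05\lambda\,\matI$ and the complementary block $\matA_{22}$ satisfies $\matA_{22}\preceq 0.05\lambda\,\matI$. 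In this basis Condition~2 reads $0.9\,\matA_{11}\preceq\matK_{11}\preceq 1.1\,\matA_{11}$ (with $\matK_{11}=\matP\matK\matP$ on $\range{\matP}$), and Condition~1 reads $\TNorm{\matK_{22}}\le 0.1\lambda$, where $\matK_{22}=(\matI_n-\matP)\matK(\matI_n-\matP)$.

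For the first claim (Conditions~1--2 imply the Theorem~\ref{thm:main} guarantees) I would bound $\TNorm{\matM^{-1/2}(\matB-\matM)\matM^{-1/2}}=\TNorm{\matM^{-1/2}(\matK-\matA)\matM^{-1/2}}$ block by block. The diagonal blocks are routine: conjugating Condition~2 by $(\matA_{11}+\lambda\matI)^{-1/2}$ and using $\matA_{11}(\matA_{11}+\lambda\matI)^{-1}\preceq\matI$ bounds the $(1,1)$ block by $0.1$, while $\TNorm{\matK_{22}-\matA_{22}}\le 0.1\lambda+0.05\lambda$ together with $(\matA_{22}+\lambda\matI)^{-1}\preceq\lambda^{-1}\matI$ bounds the $(2,2)$ block by $0.15$. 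The delicate term is the off-diagonal block $(\matA_{11}+\lambda\matI)^{-1/2}\matK_{12}(\matA_{22}+\lambda\matI)^{-1/2}$: a naive estimate through $\TNorm{\matK_{12}}$ fails because $\TNorm{\matA_{11}}$ may be enormous. Instead I would test it against unit vectors $\u\in\range{\matP}$ and $\w\in\range{(\matI_n-\matP)}$, set $\p=(\matA_{11}+\lambda\matI)^{-1/2}\u$ and $\q=(\matA_{22}+\lambda\matI)^{-1/2}\w$, and apply the PSD Cauchy--Schwarz inequality $|\p^\T\matK\q|\le(\p^\T\matK\p)^{1/2}(\q^\T\matK\q)^{1/2}$; here the conjugation damps exactly the growth of $\matA_{11}$, giving $\p^\T\matK\p\le 1.1$ and $\q^\T\matK\q\le 0.1$, so this block is at most $\sqrt{0.11}<0.34$. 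Combining the three blocks (via the norm of the $2\times2$ matrix of block norms) yields $\TNorm{\matM^{-1/2}(\matB-\matM)\matM^{-1/2}}<\tfrac12$, hence $\tfrac12\matM\preceq\matB\preceq\tfrac32\matM$.

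For the second claim I would invoke Lemma~\ref{lem:ts_ose} with both matrices equal to $\matQ$ and $\nu=\eta/s_\lambda(\matK)$ for a small absolute constant $\eta$ to be fixed. Since $\FNormS{\matQ}=s_\lambda(\matK)$ (Proposition~\ref{prop:stat_dim}), the choice $s\ge(2+3^q)s_\lambda(\matK)^2/(\eta^2\delta)$ gives, with probability at least $1-\delta$, $\TNorm{\matQ\matS^\T\matS\matQ^\T-\matQ\matQ^\T}\le\FNorm{\matQ\matS^\T\matS\matQ^\T-\matQ\matQ^\T}\le\eta$. Exactly as in Theorem~\ref{thm:main} (Proposition~\ref{prop:id} and conjugation by $\matL$, using $\matL\matL^\T=\matB$ and $\matV_q\matS^\T=\matZ$) this upgrades to $(1-\eta)\matB\preceq\matM\preceq(1+\eta)\matB$. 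From this single inequality I would read off both conditions. Restricting to $\range{(\matI_n-\matP)}$, where $\matM\preceq 1.05\lambda\,\matI$, gives $\TNorm{(\matI_n-\matP)\matK(\matI_n-\matP)}\le\frac{0.05+\eta}{1-\eta}\lambda\le 0.1\lambda$ for $\eta$ small, which is Condition~1. Restricting to $\range{\matP}$, where $\matA\succ 0.05\lambda\,\matI$ forces $\lambda\matI\prec 20\,\matA$, lets me cancel the $\lambda\matI$ terms in $\frac{1}{1+\eta}\matM\preceq\matB\preceq\frac{1}{1-\eta}\matM$ and obtain $\frac{1-20\eta}{1+\eta}\matA_{11}\preceq\matK_{11}\preceq\frac{1+20\eta}{1-\eta}\matA_{11}$, which lies within $(1\pm0.1)$ once $\eta$ is a small enough constant (say $\eta\le 1/300$), giving Condition~2. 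Fixing such an $\eta$ makes $C=(2+3^q)/\eta^2$ the required constant.

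The main obstacle is the off-diagonal block in the first claim: the subspace $\range{\matP}$ can carry arbitrarily large eigenvalues of $\matA$, so any bound passing through $\TNorm{\matK_{12}}$ or $\TNorm{\matA_{11}}$ separately is useless, and one must exploit the preconditioner's own damping through the PSD Cauchy--Schwarz step to keep the constant below $\tfrac12$. A secondary point to track is the factor $1/0.05=20$ coming from the eigenvalue threshold in the second claim; it forces $\eta$ to be a small constant, but since $s$ grows only like $\eta^{-2}$ this is harmless and is absorbed into $C$.
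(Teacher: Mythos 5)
Your proposal is correct, and while it rests on the same two pillars as the paper's own proof --- the decomposition along $\range{\matP}$ (using that $\matP$ is a spectral projector of $\matZ\matZ^\T$, so the cross blocks of the preconditioner vanish) and the PSD Cauchy--Schwarz inequality to tame the cross term of $\matK$ --- your execution of the first direction is genuinely different and sharper. The paper expands the quadratic form $\y^\T(\matK+\lambda\matI_n)\y$ with $\y=\matP\y+(\matI_n-\matP)\y$, accumulates errors additively into a $(1\pm 0.21)$ factor plus a cross term, and closes with a case analysis on whether $\sqrt{0.1\lambda}\leq\TNorm{\y^\T\matP\matZ}/3$; this yields only $\y^\T(\matK+\lambda\matI_n)\y=(1\pm c)\,\y^\T(\matZ\matZ^\T+\lambda\matI_n)\y$ for some constant $c<1$ (their numbers give $c\approx 0.21+11/15\approx 0.94$, i.e.\ a condition number around $34$, so the specific iteration count of Theorem~\ref{thm:main} follows only up to a constant factor). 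You instead conjugate by $\matM^{-1/2}$ with $\matM=\matZ\matZ^\T+\lambda\matI_n$ and bound the three blocks of $\matM^{-1/2}(\matK-\matZ\matZ^\T)\matM^{-1/2}$ separately ($0.1$, $0.15$, and $\sqrt{0.11}$ for the off-diagonal block via the same Cauchy--Schwarz idea); the $2\times 2$ matrix of block norms then has spectral norm about $0.46<\tfrac12$, giving $\tfrac12\matM\preceq\matK+\lambda\matI_n\preceq\tfrac32\matM$, hence condition number at most $3$ and the guarantees of Theorem~\ref{thm:main} \emph{verbatim}, with no case analysis. In the converse direction your argument matches the paper's in substance but is more complete: where the paper asserts that the sharpened spectral equivalence (Equation~\eqref{eq:goal} with adjusted constants) ``immediately implies condition 2,'' you supply the missing quantitative step --- on $\range{\matP}$ one has $\lambda\matI\prec 20\,\matZ\matZ^\T$, which is exactly what permits cancelling the additive $\lambda$-terms at the cost of a factor $20$ absorbed into $\eta$ and hence into $C$ --- and you obtain condition 1 directly by restricting to $\range{\matI_n-\matP}$ rather than by the paper's contradiction argument. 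Both routes are sound; yours buys tighter constants and a self-contained second direction, at the modest cost of invoking the standard block-matrix norm bound.
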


\begin{proof}
  We show that there exists constants $m$ and $M$ such that
  \begin{equation*}
  m (\matZ \matZ^\T + \lambda \matI_n) \preceq \matK + \lambda \matI_n \preceq M (\matZ \matZ^\T + \lambda \matI_n)\,.
  \end{equation*}
  This hold if there is a constant $c < 1$ such that for all $\y$
  \begin{equation*}
  \y^T(\matK + \lambda \matI_n)\y = (1 \pm c)\y^\T(\matZ \matZ^\T + \lambda \matI_n) \y
  \end{equation*}

Without loss of generality we restrict ourselves to $\TNorm{\y}=1$. We have,
\begin{eqnarray*}
\y^\T (\matK+\lambda\matI_n)\y
	   & = & \lambda\TNormS{\y} + \y^\T \matK \y
	\\ & = & \lambda + (\matP \y+(\matI -\matP )\y)^\T \matK (\matP \y+(\matI -\matP )\y)
	\\ & =  & \lambda + \y^\T \matP  \matK  \matP  \y + \y^\T (\matI -\matP )\matK (\matI -\matP )\y + 2\y^\T \matP  \matK  (\matI -\matP )\y
	\\ & =  & (1\pm 0.1)\lambda + (1 \pm 0.1)\y^\T \matP  \matZ \matZ ^\T \matP  \y + 2\y^\T \matP  \matK  (\matI -\matP )\y \pm 0.1\lambda
        \\ & =  & (1\pm 0.1)\lambda + (1 \pm 0.1)\y^\T \matP  \matZ \matZ ^\T \matP  \y + 2\y^\T \matP  \matK  (\matI -\matP )\y
	\\ &\qquad & + [ (1\pm 0.1) \y^\T(\matI -\matP )\matZ \matZ ^\T(\matI -\matP )\y + (1\pm 0.1)\y^\T(\matI -\matP )\matZ \matZ ^\T \matP \y
    \\ & \qquad &  - (1\pm 0.1) \y^\T(\matI -\matP )\matZ \matZ ^\T(\matI -\matP )\y]
        \\ & =  & (1\pm 0.1)\lambda + (1 \pm 0.1)\y^\T \matP  \matZ \matZ ^\T \matP  \y + 2\y^\T \matP  \matK  (\matI -\matP )\y
	\\ &\qquad & + [ (1\pm 0.1) \y^\T(\matI -\matP )\matZ \matZ ^\T(\matI -\matP )\y
    \\ &\qquad & + (1\pm 0.1)\y^\T(\matI -\matP )\matZ \matZ ^\T \matP \y \pm (1\pm 0.1)0.1\lambda]
	\\ & = & (1\pm 0.21)\lambda + (1\pm 0.1)\y^\T \matZ \matZ ^\T \y + 2\y^\T \matP  \matK  (\matI -\matP )\y
	\\ & = & (1\pm 0.21)\y^\T(\matZ \matZ ^\T + \lambda \matI)\y + 2\y^\T \matP  \matK  (\matI -\matP )\y
	\\ & = & (1\pm 0.21)\y^\T(\matZ \matZ ^\T + \lambda \matI)\y \pm 2\TNorm{\y^\T \matP \matV_q } \TNorm{\matV_q^\T(\matI -\matP )\y}
	\\ & = & (1\pm 0.21)\y^\T(\matZ \matZ ^\T + \lambda \matI)\y \pm 2\TNorm{\y^\T \matP \matV_q } \sqrt{0.1\lambda}(1\pm 0.1)
\end{eqnarray*}
The fourth equality is due to condition 2 (which bounds $\y^\T \matP  \matK  \matP  \y$ since $\matK = \matV_q \matV^\T_q$) and condition
1 (which bounds $\y^\T (\matI -\matP )\matK (\matI -\matP )\y$). In the fifth equality we note that
because $\matP$ is a projection on an eigenspace of $\matZ \matZ^\T$ then $(\matI -\matP )\matZ \matZ ^\T \matP = 0$.
In the sixth equality we use the definition of $\matP$ to bound $\y^\T(\matI -\matP )\matZ \matZ ^\T(\matI -\matP )\y \leq 0.1\lambda$.
In the ninth equality we use Cauchy-Schwartz. And in the tenth equality we use condition 1 again.

If $\sqrt{0.1 \lambda} \leq \TNorm{\y^\T \matP \matZ}/3$ then the last term is at most
$\frac23 1.1 \TNormS{\y^\T \matP \matV_q} = \frac{11}{15} \y^\T \matZ \matZ^\T \y$.
Notice that now $(0.21 + 11/15) < 1$, so $\y^\T (\matK+\lambda\matI_n)\y = (1 \pm c) \y^\T(\matZ \matZ ^\T + \lambda \matI)\y$
for some constant smaller than 1.

If $\sqrt{0.1 \lambda} \geq \TNorm{\y^\T \matP \matZ}/3$, then the last term is bounded
$2\TNorm{\y^\T \matP \matV_q } \sqrt{0.1\lambda}(1\pm 0.1) \leq \frac{6}{11}\lambda \leq \frac{6}{11}\y^\T (\matZ\matZ^\T + \lambda \matI_n) \y $
and again $\y^\T (\matK+\lambda\matI_n)\y = (1 \pm c) \y^\T(\matZ \matZ ^\T + \lambda \matI)\y$
for some constant smaller than 1.

As for the other direction, if the conditions of Theorem~\ref{thm:main} hold then
we showed in the proof of Theorem~\ref{thm:main} that Equation~\ref{eq:goal} holds with high probability.
It can be easily seen that by adjusting the constant in front of Equation~\eqref{eq:sbound} we can adjust the constants
in Equation~\eqref{eq:goal} to be as small as needed.
Equation~\ref{eq:goal} immediately implies condition 2. As for condition 1, if it was not true than
there is a unit vector $\y$ such that $\y^\T(\matK + \lambda \matI_n) \y \geq 1.1\lambda$ but
$\y^\T (\matZ \matZ^\T + \lambda \matI_n) \y \leq 1.05 \lambda$. Thus, if the adjusted constant is small enough it is impossible
for (the modified) equation~\eqref{eq:goal} to hold (i.e., $1.1\lambda \leq (1+\epsilon_0)1.05\lambda$ cannot hold for small
enough $\epsilon_0$).
\end{proof}

This theorem can be used in the following way. We start with some small $s$, generate $\matZ$ and test it.
If $\matZ$ is not good enough, we double $s$ and generate a new $\matZ$. We continue until we have a good
enough preconditioner (which will happen when $s$ is large enough per Theorem~\ref{thm:main}).
Testing condition 1 can be accomplished using simple power-iteration.
Testing of condition 2 can be done by constructing a subspace embedding to the range of $\matV^\T_q \matP$ using
{\sc TensorSketch}. We omit the technical details.

\section{Experiments}
\label{sec:experiments}

In this section we report experimental results with an implementation of our proposed
one-level sketching algorithm. To allow the code to scale to datasets of size one million examples
and beyond, we designed our code to use distributed memory parallelism using MPI. For most distributed
matrix operations we rely on the Elemental library~\cite{PoulsonEtAl13}. We experiment on clusters
composed on Amazon Web Services EC2 instances.


In our experiments, we use standard classification and regression datasets, using Regularized Least Squares Classification
(RLSC) to convert the classification problem to a regression problem. We use two kernel-feature map combinations:
the Gaussian kernel $k(\x, \z) = \exp(-\TNormS{\x - \z} /  2\sigma^2)$ along with random Fourier features,
and the Polynomial kernel $k(\x, \z)=(\gamma \x^\T \z + c)^q$ along with {\sc TensorSketch}. We declare convergence
of PCG when the iterate $\c$ is such that $\TNorm{\y - (\matK + \lambda \matI_n)\c} \leq \tau \TNorm{\y}$.
For multiple right hand sides we require this to hold for all right hand sides.
We set $\tau=10^{-3}$ for classification and $\tau=10^{-5}$ for regression; while these are not particularly small,
our experiments did reveal that for the tested datasets further
error reduction does not improve the generalization performance (see also subsection~\ref{sec:additional}).
\Ken{Is the generalization performance significantly worse
if $\tau=1/1000?$ Also: this convergence test is used because it's known a priori that a "near-exact" fit is possible?}
\Haim{It is actually 10e-3, and it is vector-wise. We know there is an exact fit because it is a linear system. The answer to setting tau smaller is in email.}

\subsection{Comparison to the Random Features Method}
\label{sec:vs-rfm}

\begin{figure}
\begin{centering}
\begin{tabular}{cc}
\includegraphics[width=0.3\textwidth]{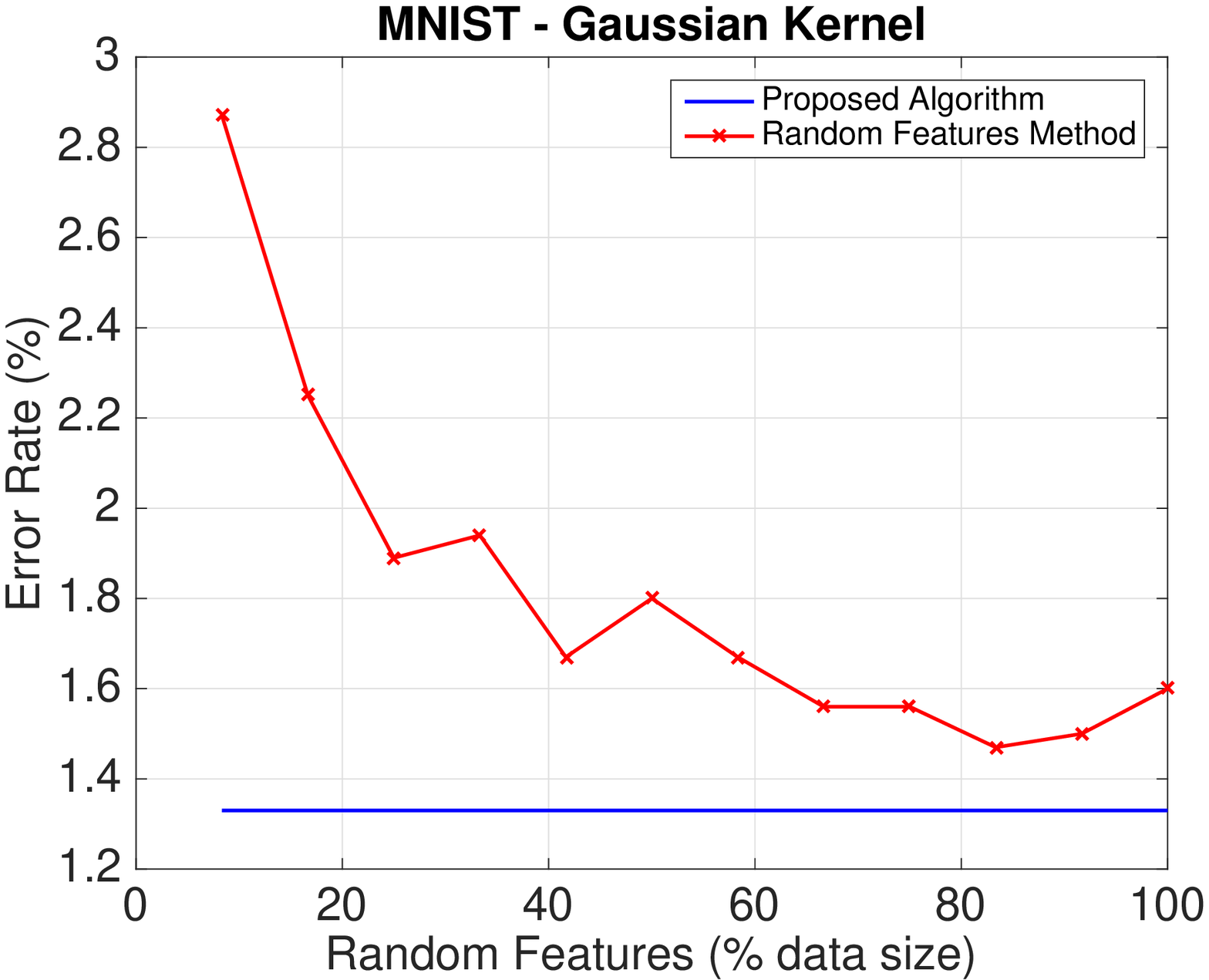} & \includegraphics[width=0.3\textwidth]{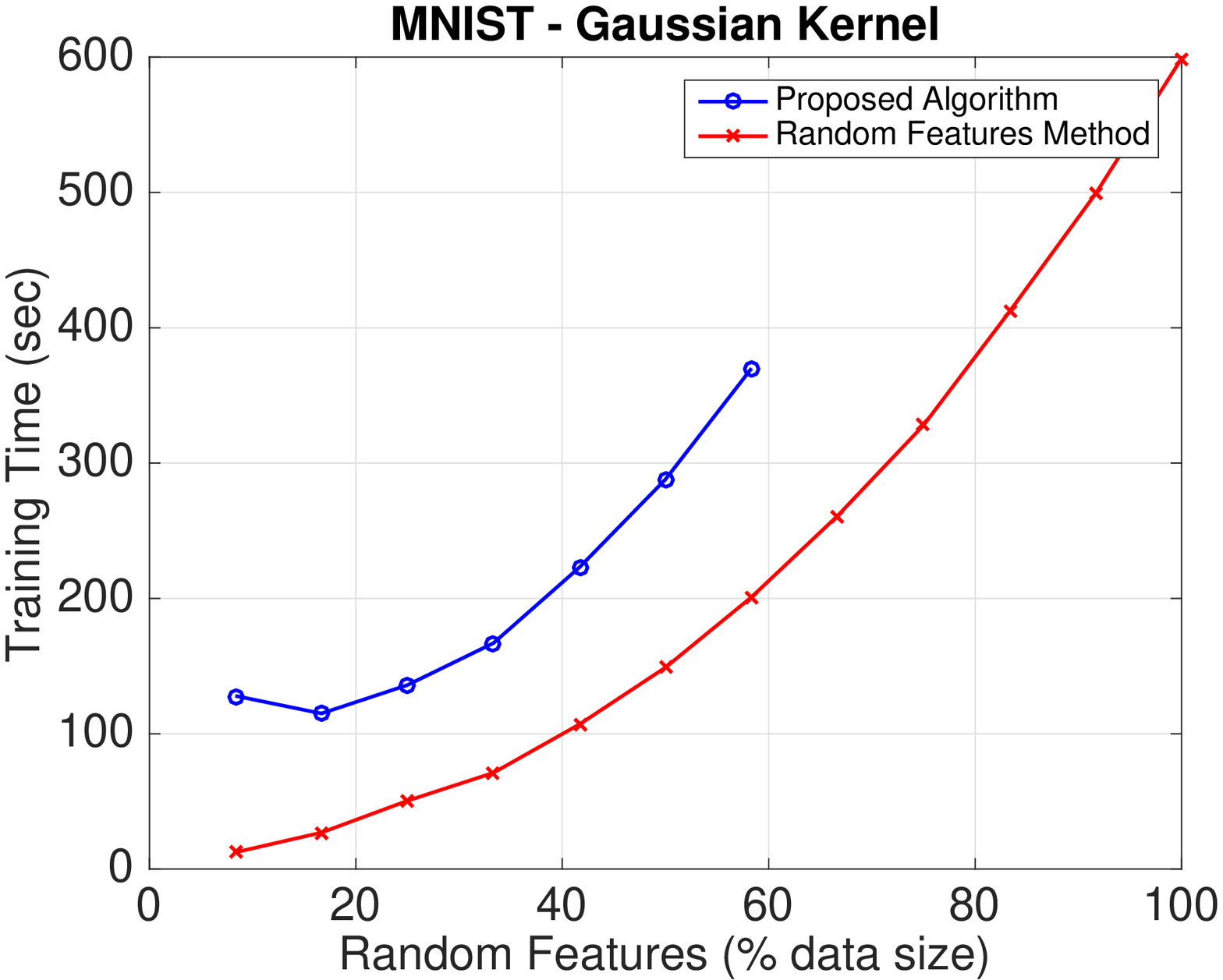}\\
\includegraphics[width=0.3\textwidth]{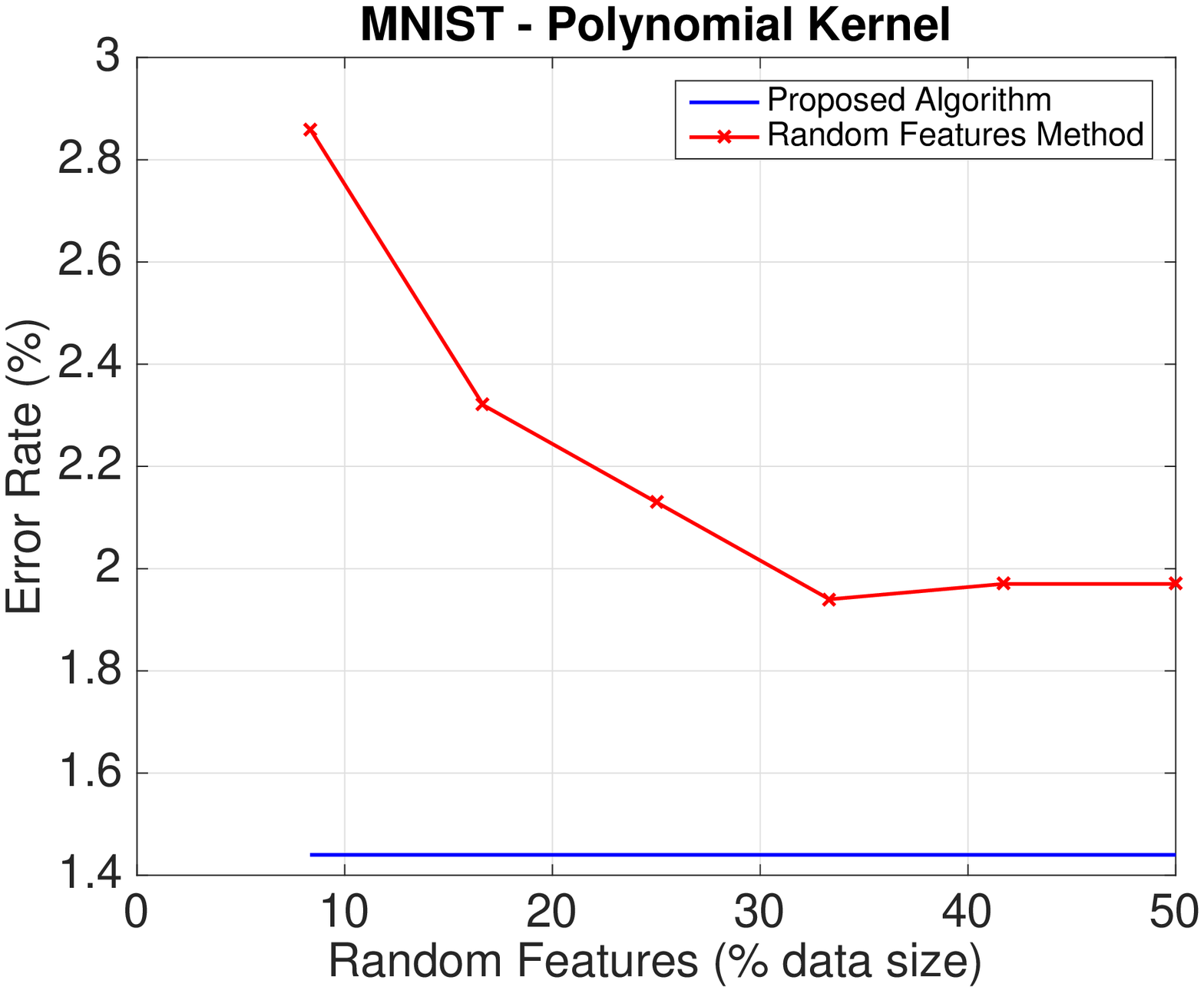} & \includegraphics[width=0.3\textwidth]{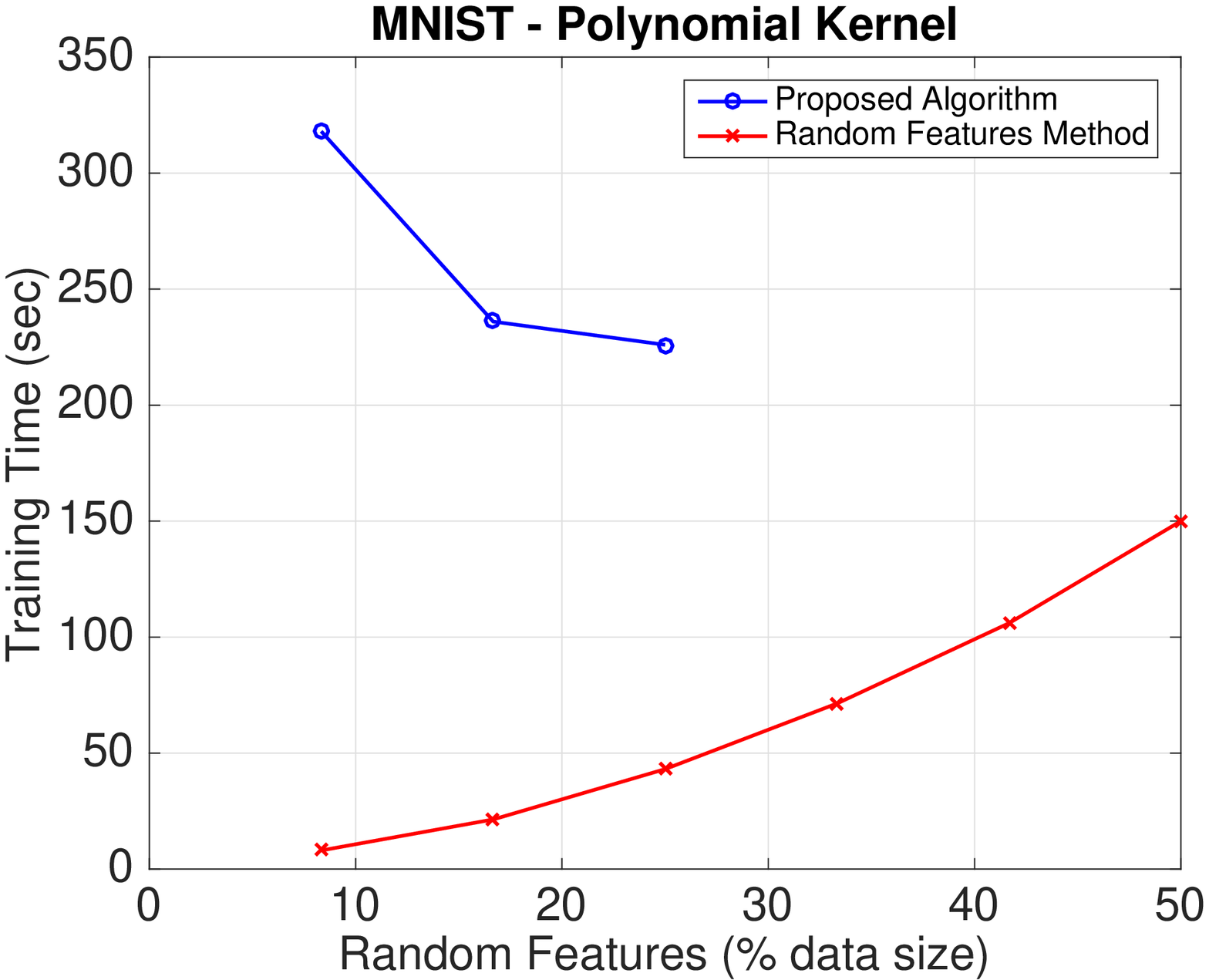}
\end{tabular}
\par\end{centering}
\protect\caption{\label{fig:mnist_vs_rf}Comparison with random features on MNIST. }
\end{figure}

\begin{figure}
\begin{centering}
\begin{tabular}{cc}
\includegraphics[width=0.3\textwidth]{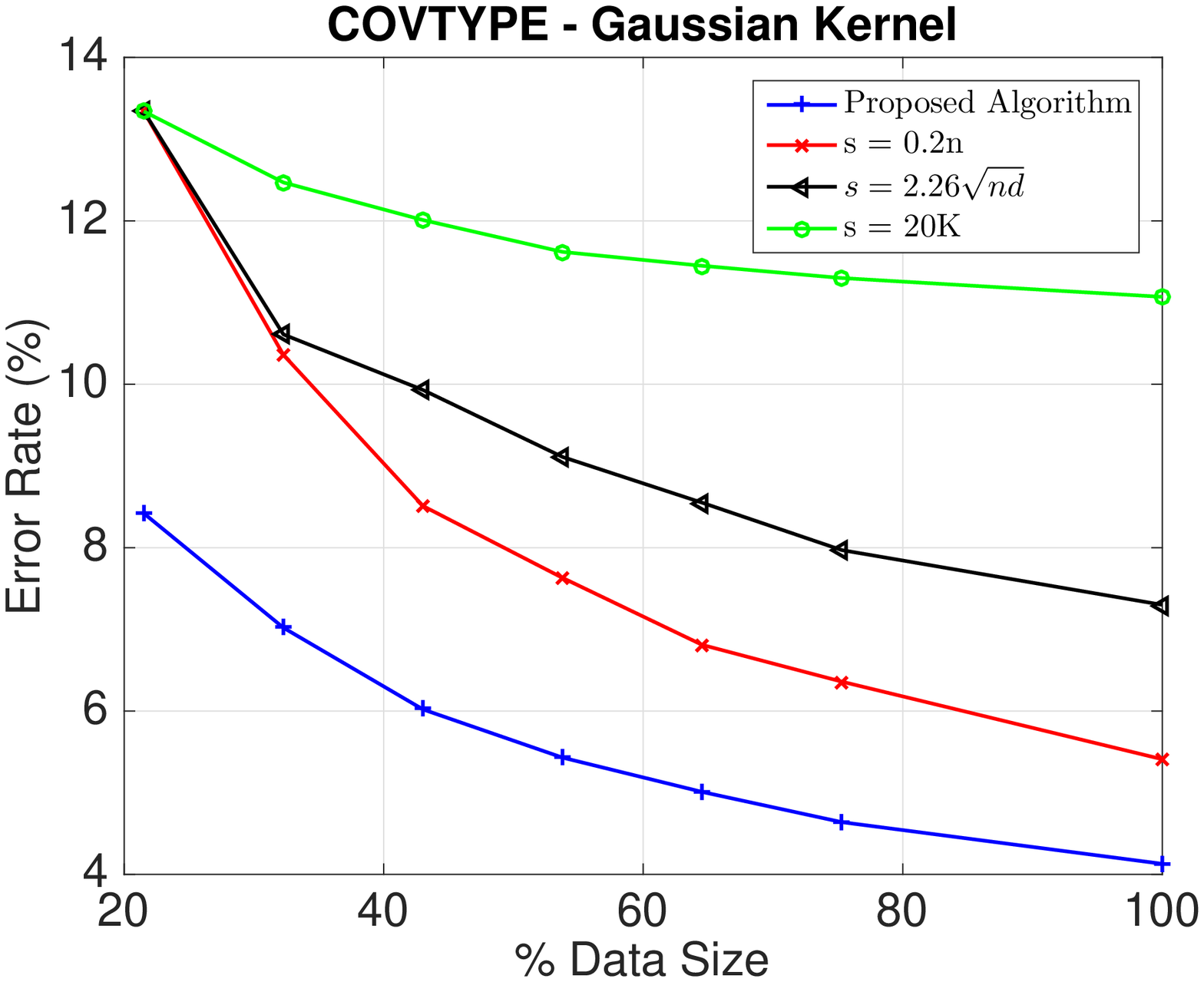} & \includegraphics[width=0.3\textwidth]{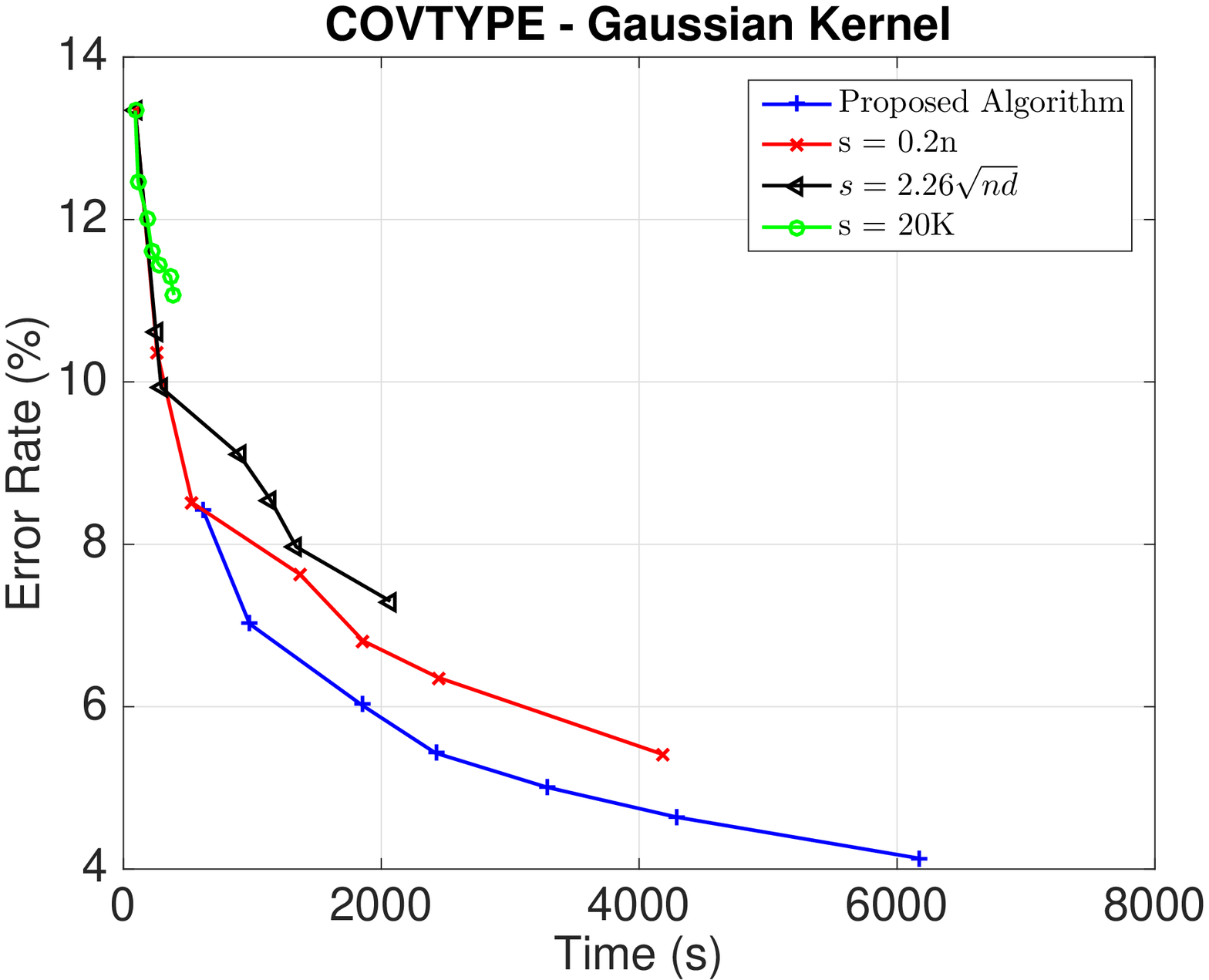}\\
\includegraphics[width=0.3\textwidth]{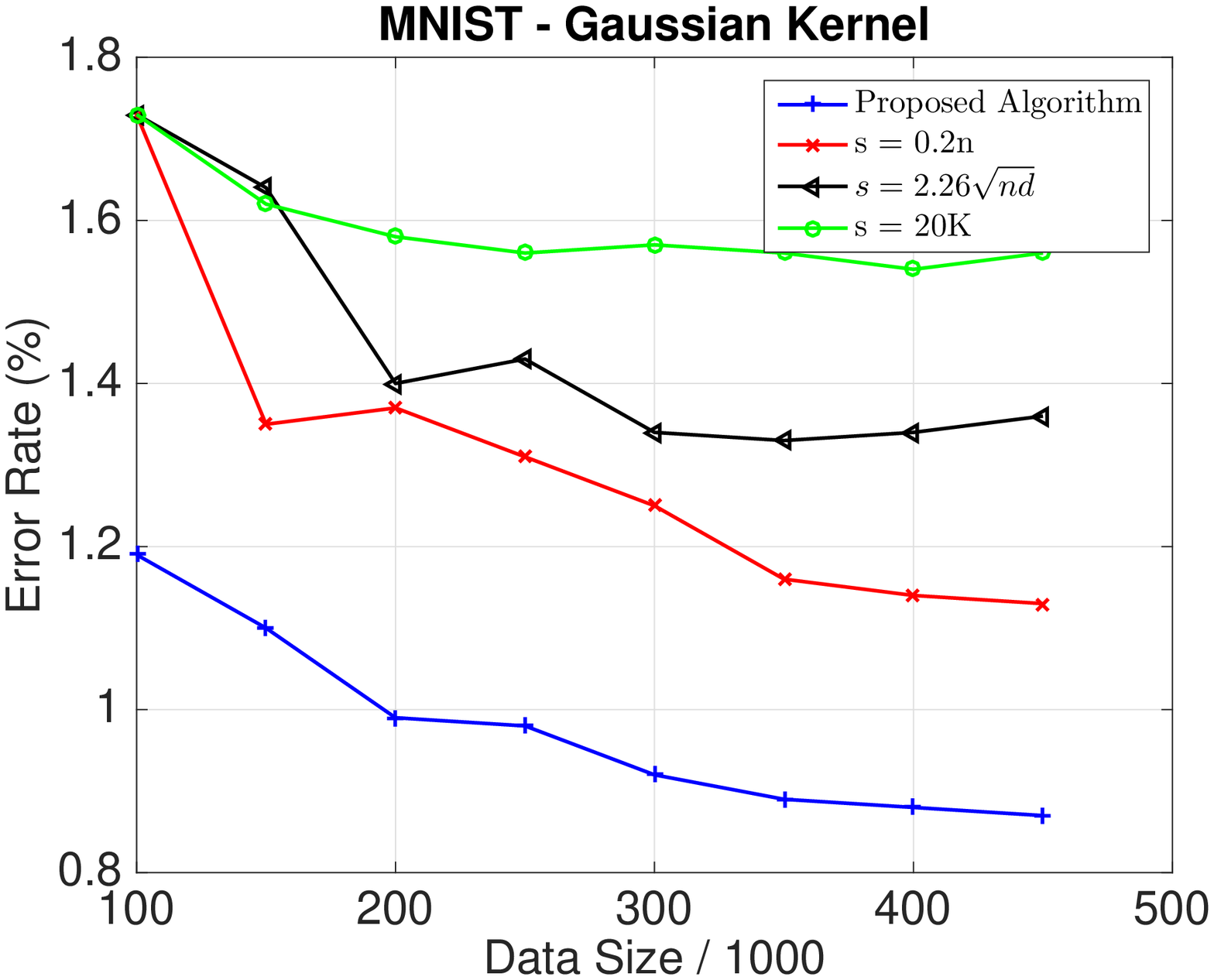} & \includegraphics[width=0.3\textwidth]{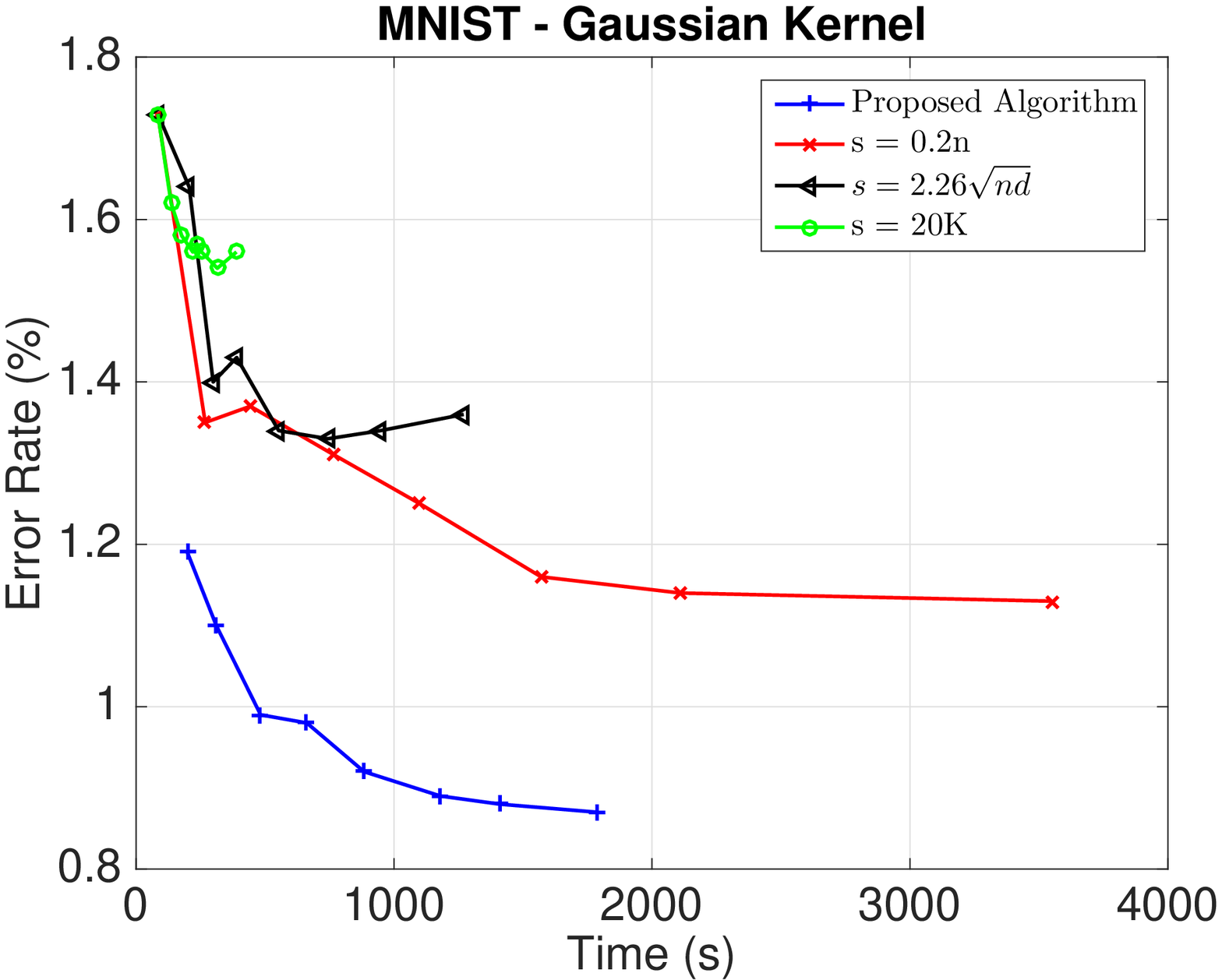}
\end{tabular}
\par\end{centering}
\protect\caption{\label{fig:err_vs}Error as a function of time and data size on MNIST and COVTYPE. }
\end{figure}

In this section we compare our method to the random features method, as defined in Section~\ref{sec:rfm}.
Our goal is to demonstrate
that our method, which solves the non-approximate kernel problem to relatively high accuracy,
is able to fully leverage the data and deliver better generalization results than is possible using
the the random features method.

We conducted experiments on the MNIST dataset. We use both the Gaussian kernel (with
$\sigma=8.5$) 
and the polynomial kernel $k(\x,\z) = (0.01 \x^\T\z + 1)^3$. The regularization parameter was set
to $\lambda=0.01$. Running time were measured on a single c4.8xlarge EC2 instance.

Results are shown in Figure~\ref{fig:mnist_vs_rf}. Inspecting the error rates (left plots), we see
that while the random features method is able to deliver close to optimal error rates, there is
a clear gap between the errors obtained using our method and the ones obtained by the random
features method even for a very large number of random features. {\em The gap persists even
  if we set the number of random features to be as large as the training size!}

In terms of the running time (right plots), our method is, as expected, generally more expensive
than the sketch-and-solve approach of the random features method, at least when the number of random features is the same.
However, in order to get close to the performance of our method, the random features method
requires so many features that its running time eventually surpasses that of our method
with the optimal number of features (recall that for our method the number of random features
only affects the running time, not the generalization performance.)

It is worth noting that the optimal training time for our method on MNIST was actually quite small:
less than 2 minutes for the Gaussian kernel, and less than 4 minutes for the polynomial kernel,
and this without sacrificing in terms of generalization performance by using an approximation.

In Figure~\ref{fig:err_vs} we further explore the complex interaction between algorithmic complexity, quantity of data
and predicative performance. In this set of experiments we use subsamples of the COVTYPE dataset (to a maximum
sample of 80\% of the data; the rest is used for testing and validation) and subsamples from the extended MNIST-8M dataset
(to a maximum of 450K data points). We compare the performance of our method as the number of samples increases
to the performance of the random features method with three different profiles for setting $s$:
$s=20000$ (an $O(n)$ training algorithm), $s=2.26\sqrt{nd}$ (an $O(n^2 d)$ algorithm) and
$s = 0.2n$ (an $O(n^3)$ time, $O(n^2)$ memory algorithm). For both datasets we plot both error
as a function of the datasize and error as a function of training time. The graphs clearly
demonstrate the superiority of our method.

\if0
In the left graph we compare
the test error rate of our method with 100,000 training examples to the test error rate of the
random features method with {\em more} examples: 150,000 and 200,000. We observe in the graph
that with 200,000 examples, with enough random features eventually the test error just barely drops below that of
our method with 100,000 examples, before finally running out of memory (this experiment was conducted
on a cluster of two c4.8xlarge EC2 instances). With 150,000 examples, the test error never drops below
that of our method with 100,000 examples.

In the right graph, we compare the performance of our method as the number of examples increases
to the performance of the random features method with three different profiles for setting $s$:
$s=20000$ (an $O(n)$ training algorithm), $s=8.61\sqrt{nd}$ (an $O(n^2 d)$ algorithm) and
$s = 0.2n$ (an $O(n^3)$ time, $O(n^2)$ memory algorithm). Only the $O(n^3)$ algorithm is
somewhat competitive with our method, but there still exists a very noticeable gap on the full dataset
even though the number of random features is close to 100,000.
\fi

\if0
\subsection{Scalability Experiments on BlueGene/Q}

\begin{figure}
\begin{centering}
\begin{tabular}{cc}
\includegraphics[width=0.45\columnwidth]{fmnist_both.eps} & \includegraphics[width=0.45\columnwidth]{timit_both.eps}\tabularnewline
\end{tabular}
\par\end{centering}
\protect\caption{\label{fig:bgq} Scaling experiment on BlueGene/Q. }
\end{figure}

In Figure~\ref{fig:bgq} we explore the behavior of our algorithm on BlueGene/Q when
with successively larger samples from the extended 8M MNIST dataset, and the TIMIT
speech recognition dataset. We used a fixed amount of computing resources: 128 nodes, which
corresponds to 1024 cores and 2TB of memory (this setup does not correspond to
usual notions of strong and weak scaling). We observe that:
\begin{compactitem}[$\circ$]

\item While Gaussian kernel constantly delivers better generalization result, its training time
  is not always worse.

\item With respect to $n$, for these sizes and datasets the algorithm behaves better than $n^2$
(i.e. running on 400,000 examples is at most only x7 more expensive than 100,000 examples).

\item On TIMIT: Generalization results are not as good as those obtained using an approximate method
  on the full dataset (see~\cite{HuangEtAl14}).

\end{compactitem}
\fi

\subsection{Resources and Running Time on a Cloud Service}

Our implementation is designed to leverage distributed processing using clusters of machines.
The wide availability of cloud-based computing services provides easy access to such
platforms on an on-demand basis.

To give an idea of the running time and resources (and as a consequence the cost)
of training models using our algorithm on public cloud services, we applied our method
to various popular datasets on EC2. The results are summarized in Table~\ref{tab:ec2}.
We can observe that using our method it is possible to train high-quality models
on datasets as large as one million data points in a few hours. 
We remark that while we did tune $\sigma$ and $\lambda$ somewhat, we did not attempt to
tune them to the best possible
values.



\subsection{Additional Experimental Results}
\label{sec:additional}

\begin{sidewaystable*}
{\small
\protect\caption{\label{tab:ec2} Running time and resources on EC2. The datasets in the top part are classification datasets, and
on the bottom they are regression datasets.}
\begin{center}
\begin{tabular}{|c|c|c|c|c|c|c|c|c|}
  \hline
      {\bf Dataset} & {\bf n}  & {\bf d} & {\bf Parameters} & {\bf Resources} & {\bf s} & {\bf Iterations} & {\bf Time (sec)} & {\bf Error Rate} \tabularnewline
      \hline
      \sc{GISETTE} & 6,000 & 5,000 & $\sigma=3, \lambda=0.01$ & 1 c4.large & 500 & 6 & 8.1 & 3.50\% \tabularnewline
      \hline
      \sc{ADULT} & 32,561 & 123 & $\sigma = 8, \lambda=0.01$ & 1 c4.4xlarge & 5,000 & 13 & 19.8 & 14.99\% \tabularnewline
      \hline
      \sc{IJCNN1} & 49,990 & 22 & $\sigma = 0.3, \lambda=0.01$ & 1 c4.8xlarge & 12,500 & 120 & 85.1 & 1.39\% \tabularnewline
      \hline
      \sc{MNIST} & 60,000 & 780 & $\sigma=8.5, \lambda=0.01$ & 1 c4.8xlarge & 10,000 & 85 & 115 & 1.33\% \tabularnewline
      \hline
      \sc{MNIST-400K} & 400,000 & 780 & $\sigma=8.5, \lambda=0.01$ & 8 r3.8xlarge & 35,000 & 99 & 1580 & 0.89\%\tabularnewline
      \hline
      \sc{MNIST-1M} & 1,000,000 & 780 & $\sigma=8.5, \lambda=0.01$ & 42 r3.8xlarge & 35,000 & 189 & 3820 & 0.72\% \tabularnewline
      \hline
      \sc{EPSILON} & 400,000 & 2,000 & $\sigma=8, \lambda=0.01$ & 8 r3.8xlarge & 20,000 & 12 & 526 & 10.21\% \tabularnewline
      \hline
      \sc{COVTYPE} & 464,809 & 54 & $\sigma=0.1, \lambda=0.01$ & 8 r3.8xlarge & 55,000 & 555 & 6180 & 4.13\% \tabularnewline
      \hline \hline
      \sc{YEARMSD} &  463,715 & 90 & $\sigma=3, \lambda=0.001$ & 8 r3.8xlarge & 15,000 & 15 & 289 & $4.58\times 10^{-3}$ \tabularnewline
\hline

\hline
\end{tabular}
\end{center}}

\vspace{0.2in}

{\small
\protect\caption{\label{tab:precond lambda} Running time and resources on EC2 when using different $\lambda$ in the preconditioner.}
\begin{center}
\begin{tabular}{|c|c|c|c|c|c|c|}
  \hline
      {\bf Dataset} &  {\bf Parameters} & {\bf Resources} & {\bf Precond $\lambda$ ($\lambda_p$)} & {\bf s} & {\bf Iterations} & {\bf Time (sec)}  \tabularnewline
      \hline
      \sc{GISETTE} &  $\sigma=3, \lambda=0.01$ & 1 c4.large & 0.1 & 500 & 6 & 8.1 \tabularnewline
      \hline
      \sc{ADULT} &  $\sigma = 8, \lambda=0.01$ & 1 c4.4xlarge &  & 5,000 & 17 & 20.7  \tabularnewline
      \hline
      \sc{IJCNN1} &  $\sigma = 0.3, \lambda=0.01$ & 1 c4.8xlarge &  0.1 & 10,000 & 52 & 55.5  \tabularnewline
      \hline
      \sc{MNIST} &  $\sigma=8.5, \lambda=0.01$ & 1 c4.8xlarge & 0.1 & 10,000 & 37 & 76.3  \tabularnewline
      \hline
      \sc{MNIST-400K} &  $\sigma=8.5, \lambda=0.01$ & 8 r3.8xlarge & 0.1 & 40,000 & 42 & 1060 \tabularnewline
      \hline
      \sc{MNIST-1M} &  $\sigma=8.5, \lambda=0.01$ & 42 r3.8xlarge & 0.1 & 40,000 & 77 & 1210 \tabularnewline
      \hline
      \sc{EPSILON} & $\sigma=8, \lambda=0.01$ & 8 r3.8xlarge &  0.1 & 20,000 &  18 & 547  \tabularnewline
      \hline
      \sc{COVTYPE} &  $\sigma=0.1, \lambda=0.01$ & 8 r3.8xlarge & 0.2 & 40,000 & 206 & 2960 \tabularnewline
      \hline \hline
      \sc{YEARMSD} &  $\sigma=3, \lambda=0.001$ & 8 r3.8xlarge & 0.01 & 15,000 & 20 & 289  \tabularnewline
\hline

\hline
\end{tabular}
\end{center}}
\end{sidewaystable*}

\begin{sidewaystable*}
{\small
\protect\caption{\label{tab:compare-hilbert} Comparison to high-performance random features solver based on block ADMM~\cite{AvronSindhwani14}.}
\begin{center}
\begin{tabular}{|c|c|c|c|c|c|c|}
  \hline
      {\bf Dataset} &  {\bf Resources} & {\bf ADMM - s} & {\bf ADMM - time}& {\bf ADMM - error}& {\bf Proposed - time} & {\bf Proposed - error} \tabularnewline
      \hline
\if0
      \sc{GISETTE} &   1 c4.large &  &  & & 8.1 & 3.50\% \tabularnewline
      \hline
      \sc{ADULT} &  1 c4.4xlarge &  & & & 19.8 & 14.99\% \tabularnewline
      \hline
      \sc{IJCNN1} &  1 c4.8xlarge & 12,500  & 21.4 & 5.60\% & 85.1 & 1.39\% \tabularnewline
      \hline
\fi
      \sc{MNIST} &  1 c4.8xlarge & 15,000 & 102 & 1.95\% & 115 & 1.33\% \tabularnewline
      \hline
      \sc{MNIST-400K} &  8 r3.8xlarge & 100,000 & 1017 & 1.10\%& 1580 & 0.89\%\tabularnewline
      \hline
      \sc{EPSILON}  &  8 r3.8xlarge & 100,000 & 1823 & 11.58\% & 526  & 10.21\% \tabularnewline
      \hline
      \sc{COVTYPE}&  8 r3.8xlarge & 115,000 & 6640 & 5.73\% & 6180 &  4.13\% \tabularnewline
      \hline \hline
      \sc{YEARMSD} &  8 r3.8xlarge & 115,000 & 958 & $5.01\times 10^{-3}$  &  289 & $4.58\times 10^{-3}$ \tabularnewline
\hline

\hline
\end{tabular}
\end{center}}
\vspace{0.2in}

{\small
\protect\caption{\label{tab:noprecond} Comparison to CG without preconditioning. See text for details on cells marked {\sc FAIL}.}
\begin{center}
\begin{tabular}{|c|c|c|c|c|c|}
  \hline
      {\bf Dataset} & {\bf No Precond - its}  & {\bf No Precond - time} & {\bf s} & {\bf Proposed - its} & {\bf Proposed - time}  \tabularnewline
      \hline
      \sc{GISETTE} & 1 & 3.66 & 500 & 6 & 8.1 \tabularnewline
      \hline
      \sc{ADULT} &  369 & 52.9 & 5,000 & 13 & 19.1 \tabularnewline
      \hline
      \sc{IJCNN1} & 764 & 230 & 10,000 & 120 & 85.1 \tabularnewline
      \hline
      \sc{MNIST} & 979 & 500 & 10,000 & 85 & 115 \tabularnewline
      \hline
      \sc{MNIST-200K} & {\sc FAIL} & 3890 & 25,000 & 90 & 1090 \tabularnewline
      \hline
      \sc{MNIST-300K} & {\sc FAIL} & 5320 & 25,000 & 117 & 1400 \tabularnewline
      \hline
      \sc{EPSILON} &  194 & 854 & 20,000 & 12 & 526 \tabularnewline
      \hline
      \sc{COVTYPE} & 913 & 5111 & 55,000 & 555 & 6180 \tabularnewline
      \hline
      \hline
      \sc{YEARMSD} & {\sc FAIL} & 1900 & 15,000 & 15 & 289 \tabularnewline
\hline

\hline
\end{tabular}
\end{center}}
\end{sidewaystable*}


So far we described and experimented with using $\matZ\matZ^\T + \lambda \matI_n$ as a preconditioner. One straightforward idea
is to try to use  $\matZ\matZ^\T + \lambda_p \matI_n$ as a preconditioner, where $\lambda_p$ is now a parameter that is not
necessarily equal to $\lambda$. Although our current theory does not cover this case, we evaluate this idea empirically
and report the results Table~\ref{tab:precond lambda}. Our experiments indicate that setting $\lambda_p$ to be larger
than $\lambda$ often produces higher quality preconditioner. In particular, $\lambda_p = 10\lambda$ seems like a reasonable
rule-of-thumb for setting $\lambda_p$. We leave a theoretical analysis of this scheme to future work.

In Table~\ref{tab:compare-hilbert} we compare our method to an high-performance distributed block ADMM-based solver using
the Random Features Method~\cite{AvronSindhwani14}. We use the same resource configuration as in Table~\ref{tab:ec2}
(we remark that the ADMM solver is rather memory efficient and can function with less resources).
The ADMM solver is more versatile in the choice of objective function, so we use hinge-loss (SVM).
We use the same bandwidth ($\sigma$) and reguarization parameter ($\lambda$) and in Table~\ref{tab:ec2}.
In general we set the number of random features ($s$) to be equal 25\% of the dataset size. We clearly see that our
method achieves better error rates, usually with better running times.

In Table~\ref{tab:noprecond} we examine whether the preconditioner indeed improves convergence
and running time. We set the maximum iterations to 1000, and declare failure if failed
to converge to $10^{-3}$ tolerance for classification and $10^{-5}$ for regression.
We remark the following on the items labeled {\sc FAIL}:
\begin{itemize}
\item For {\sc MNIST-200k}, without preconditioning CG failed to convergence but the error rate of the final model was just as good as our method.

\item For {\sc MNIST-300K} the error rate of the final model deteriorated to 1.33\% (compare to 0.92\%).

\item For {\sc YEARMSD} the final residual was $6.63\times 10^{-4}$ and the error deteriorated
  to $5.25\times 10^{-3}$ (compare to $4.58\times 10^{-3}$).
\end{itemize}
Almost always (with two exception, one of them a tiny dataset) our method was faster
than the non-preconditioned method. More importantly our method is much more robust: the non-preconditioned algorithm
failed in some cases.

\begin{table}[t!]
{\small
\protect\caption{\label{tab:tol} Comparison between setting tolerance to $10^{-2}$ and $10^{-3}$.}
\begin{center}
\begin{tabular}{|c|c|c|c|c|}
  \hline
      {\bf Dataset} & {\bf $10^{-2}$ - its}  & {\bf $10^{-2}$ - Error Rate} & {\bf $10^{-3}$ - its} & {\bf $10^{-3}$ - Error Rate}  \tabularnewline
      \hline
      \sc{GISETTE} & 3 & 3.50 & 6 & 3.50\% \tabularnewline
      \hline
      \sc{ADULT} & 10 & 14.99\% &  13 & 14.99\% \tabularnewline
      \hline
      \sc{IJCNN1} & 69 & 1.38\% &  120 & 1.39\% \tabularnewline
      \hline
      \sc{MNIST} & 54 & 1.37\% & 85 & 1.33\% \tabularnewline
      \hline
      \sc{MNIST-200K} & 54 & 1.00\% &  90 & 0.99\% \tabularnewline
      \hline
      \sc{MNIST-300K} & 73 & 0.92\% &  117 & 0.92\% \tabularnewline
      \hline
      \sc{EPSILON} & 7 & 10.21\% &  12 & 10.22\% \tabularnewline
      \hline
      \sc{COVTYPE} & 253 & 4.12\% &  555 & 4.13\% \tabularnewline
\hline

\hline
\end{tabular}
\end{center}}
\end{table}

In Table~\ref{tab:tol} we examine how classification generalization quality is affected by choosing
a more relaxed tolerance criteria. In general, setting tolerance to $10^{-3}$ is
just barely better than $10^{-2}$ in terms of test error. Only in one case ({\sc MNIST}) the difference
is bigger than 0.01\%. Running time for
$10^{-3}$ by is worse by a small factor and results are almost the same.
We  set the tolerance to $10^{-3}$ to be consistent with the notion of exploiting data to
the fullest, although in practice $10^{-2}$ seems to be sufficient.

\section{Conclusions}
\label{sec:conclusions}

Kernel ridge regression is a powerful non-parametric technique whose solution has a closed form that involves the solution of a linear system,
and thus is amenable to applying advanced numerical linear algebra techniques.
A naive method for solving this system is too expensive to be realistic beyond ``small data''.
In this paper we propose an algorithm that solves this linear system to high accuracy using a combination
of sketching and preconditioning. Under certain conditions, the running time of our algorithm is
somewhere between $O(n^2)$ and $O(n^3)$, depending on properties of the data, kernel, and feature map.
Empirically, it often behaves like $O(n^2)$. As we show experimentally, our algorithm is highly effective on
datasets with as many as one million training examples.

Obviously the main limitation of our algorithm is the $\Theta(n^2)$ memory requirement for storing the
kernel matrices. There are a few ways in which our algorithm can be leveraged to allow learning on much
larger datasets. One non-algorithmic software-based idea is to use an out-of-core algorithm, i.e. use SSD storage, or even magnetic drive, to hold
the kernel matrix. From an algorithmic perspective there are quite a few options. One idea is to use boosting
to design a model that is an ensemble of a several smaller models based on non-uniform sampling of the data.
Huang et al. recently showed that this can be highly effective in the context of kernel ridge regression~\cite{HuangEtAl14}.
Another idea is to use our solver as the block solver in the block coordinate descent algorithm suggested by Tu et al.~\cite{TuEtAl16}.
We leave the exploration of these techniques to future work.

More importantly, one should note that even in the era of Big Data, it is not always the case that for a particular
problem we have access to very big training set. In such cases it is even more important to fully leverage
the data, and produce the best possible model. Our algorithm and implementation provide an effective
way to do so.

\if0
a distributed-memory implementation
of our algorithm does very well on
``medium size data'', allowing learning with datasets of up to 500,000 examples in reasonable time using modest,
readily available, resources.

This running time, along with the $O(n^2)$ dependence in terms of memory, clearly makes our algorithm
infeasible for truly ``big data''. One can potentially sample the data, but it may be more beneficial
to use a method that resorts to approximations while handling larger datasets. Our experiments hint
that sometimes the tradeoff favors the former (e.g., the results on COVTYPE) and in others the latter
(e.g., the results on TIMIT).
\fi

\section*{Acknowledgments}

The authors acknowledge the support from the XDATA program of the Defense
Advanced Research Projects Agency (DARPA), administered through Air
Force Research Laboratory contract FA8750-12-C-0323.

\bibliography{FasterKRR}
\bibliographystyle{plain}

\appendix

\section{Appendix - Proof of Theorem~\ref{thm:genmain}}

\begin{prop}
  \label{prop:genid}
  Let $\matK + \lambda \matI_n = \matL \matL^\T$ be a Cholesky decomposition of $\matK + \lambda \matI_n$. Let
  $\matM = \matL^{-1}$. Let ${\cal Q} = (\q_1, \dots, \q_n) \subset {\cal H}_k$ defined by
  $
  \q_i = \sum^n_{j=1}\matM_{ij}k(\x_j, \cdot)\,.
  $
  We have
  \begin{equation*}
  \matK({\cal Q}, {\cal Q}) + \lambda \matL^{-1} \matL^{-\T} = \matI_n\,.
  \end{equation*}
\end{prop}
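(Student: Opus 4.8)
The plan is to treat this as the Hilbert-space analogue of Proposition~\ref{prop:id}: the matrix $\matK({\cal Q}, {\cal Q})$ will play exactly the role that $\matQ \matQ^\T$ plays there, and the identity will follow by the same algebraic manipulation once we express the Gram matrix of ${\cal Q}$ in terms of $\matL$ and $\matK$. So the first and essentially only substantive step is to compute $\matK({\cal Q}, {\cal Q})$ explicitly.

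To do this, I would expand a generic entry using bilinearity of the RKHS inner product together with the reproducing property $\dotprod{k(\x_a, \cdot)}{k(\x_b, \cdot)}{{\cal H}_k} = k(\x_a, \x_b) = \matK_{ab}$. Writing $\q_i = \sum_a \matM_{ia} k(\x_a, \cdot)$ and $\q_j = \sum_b \matM_{jb} k(\x_b, \cdot)$, I get
\[
\matK({\cal Q}, {\cal Q})_{ij} = \dotprod{\q_i}{\q_j}{{\cal H}_k} = \sum_{a,b} \matM_{ia} \matM_{jb} \matK_{ab} = (\matM \matK \matM^\T)_{ij}\,,
\]
so that $\matK({\cal Q}, {\cal Q}) = \matM \matK \matM^\T = \matL^{-1} \matK \matL^{-\T}$, using $\matM = \matL^{-1}$.

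The remaining step is pure matrix algebra. From the Cholesky decomposition we have $\matK = \matL \matL^\T - \lambda \matI_n$, and substituting gives
\[
\matK({\cal Q}, {\cal Q}) = \matL^{-1}(\matL \matL^\T - \lambda \matI_n)\matL^{-\T} = \matI_n - \lambda \matL^{-1} \matL^{-\T}\,,
\]
which rearranges immediately to the claimed identity $\matK({\cal Q}, {\cal Q}) + \lambda \matL^{-1} \matL^{-\T} = \matI_n$. There is no real obstacle here; the only point requiring any care is the Gram-matrix computation, where one must correctly invoke bilinearity and the reproducing property to identify $\matK({\cal Q}, {\cal Q})$ with $\matM \matK \matM^\T$ (in particular keeping track of which index of $\matM$ is summed against which argument of $\matK$). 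Everything after that mirrors the one-line proof of Proposition~\ref{prop:id} verbatim.
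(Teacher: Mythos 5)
Your proposal is correct and matches the paper's own proof essentially verbatim: both compute $\matK({\cal Q},{\cal Q}) = \matL^{-1}\matK\matL^{-\T}$ via bilinearity of the RKHS inner product and the reproducing property, and both then conclude by the same one-line algebraic rearrangement of the Cholesky identity $\matK + \lambda\matI_n = \matL\matL^\T$. Nothing is missing.
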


\begin{proof}
  Since $\matK_{ij} = k(\x_i, \x_j) = \dotprod{k(\x_i, \cdot)}{k(\x_j, \cdot)}{{\cal H}_k}$ and inner products
  are bilinear, we have $\matK({\cal Q}, {\cal Q}) = \matL^{-1} \matK \matL^{-\T}$. So,
  \begin{equation*}
  \matI_n = \matL^{-1} (\matK + \lambda \matI_n) \matL^{-\T} = \matK({\cal Q}, {\cal Q}) + \lambda \matL^{-1} \matL^{-\T}\,.
  \end{equation*}
\end{proof}

\begin{prop}
  \label{prop:genstat_dim}
  Under the conditions of the previous proposition, $$\Trace{\matK({\cal Q}, {\cal Q})} = s_\lambda(\matK)\,.$$
\end{prop}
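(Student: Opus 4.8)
The plan is to exploit the identity already extracted in the proof of Proposition~\ref{prop:genid}, namely that $\matK({\cal Q}, {\cal Q}) = \matL^{-1} \matK \matL^{-\T}$. This equality followed from the bilinearity of the inner product together with the definition $\q_i = \sum_j \matM_{ij} k(\x_j, \cdot)$ with $\matM = \matL^{-1}$, so I would begin by simply quoting it rather than re-deriving it.

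With this in hand, the result is a one-line trace manipulation. First I would take traces of both sides and invoke the cyclic invariance of the trace to move the factor $\matL^{-\T}$ to the front:
\[
\Trace{\matK({\cal Q}, {\cal Q})} = \Trace{\matL^{-1} \matK \matL^{-\T}} = \Trace{\matL^{-\T} \matL^{-1} \matK}\,.
\]
Next I would recognize that $\matL^{-\T} \matL^{-1} = (\matL \matL^\T)^{-1} = (\matK + \lambda \matI_n)^{-1}$, which is exactly the Cholesky relation $\matL \matL^\T = \matK + \lambda \matI_n$ assumed in the previous proposition. Substituting gives
\[
\Trace{\matK({\cal Q}, {\cal Q})} = \Trace{(\matK + \lambda \matI_n)^{-1} \matK} = s_\lambda(\matK)\,,
\]
which is the claim by the definition of the statistical dimension.

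There is essentially no genuine obstacle here; the content is purely the cyclic property of the trace plus the two facts imported from Proposition~\ref{prop:genid}. The only point requiring any care is bookkeeping of transposes, to ensure that the product $\matL^{-\T}\matL^{-1}$ (and not $\matL^{-1}\matL^{-\T}$) is the one that collapses to $(\matK + \lambda \matI_n)^{-1}$ after the cyclic shift. As an alternative route, should one prefer to avoid re-citing the factored form of $\matK({\cal Q},{\cal Q})$, I could instead start from the identity $\matK({\cal Q}, {\cal Q}) = \matI_n - \lambda \matL^{-1} \matL^{-\T}$ also established in Proposition~\ref{prop:genid} and reproduce the eigenvalue computation of Proposition~\ref{prop:stat_dim} verbatim, summing $\sigma_i^2/(\sigma_i^2 + \lambda)$ over the singular values of $\matV_q$; this yields the same conclusion but is slightly longer, so I would favor the direct trace argument above.
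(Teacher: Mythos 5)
Your main argument is correct, and it differs from the paper's proof in a small but genuine way. The paper starts from the additive identity of Proposition~\ref{prop:genid}, $\matK({\cal Q},{\cal Q}) = \matI_n - \lambda \matL^{-1}\matL^{-\T}$, and then evaluates the trace spectrally: writing $\lambda_1,\dots,\lambda_n$ for the eigenvalues of $\matK$, it computes $\Trace{\matK({\cal Q},{\cal Q})} = n - \sum_i \lambda/(\lambda_i+\lambda) = \sum_i \lambda_i/(\lambda_i+\lambda) = \Trace{(\matK+\lambda\matI_n)^{-1}\matK}$. You instead take the multiplicative factorization $\matK({\cal Q},{\cal Q}) = \matL^{-1}\matK\matL^{-\T}$ (an intermediate step inside the paper's proof of Proposition~\ref{prop:genid}) and finish with cyclicity of the trace and $\matL^{-\T}\matL^{-1} = (\matL\matL^\T)^{-1} = (\matK+\lambda\matI_n)^{-1}$; your bookkeeping of transposes here is right. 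Your route is shorter and avoids any eigenvalue decomposition, at the cost of reaching back into the body of the previous proof rather than using only its stated conclusion; the paper's spectral route is what makes the formula $\sum_i \lambda_i/(\lambda_i+\lambda)$ visible, which is the standard way the statistical dimension is interpreted. One minor caution on your alternative route: in this general-kernel setting there is no matrix $\matV_q$ with $\matK = \matV_q\matV_q^\T$ available (that object belongs to the polynomial-kernel Theorem~\ref{thm:main}), so you should sum $\lambda_i/(\lambda_i+\lambda)$ over the eigenvalues of $\matK$, exactly as the paper does in Proposition~\ref{prop:genstat_dim}, rather than invoke Proposition~\ref{prop:stat_dim} verbatim with singular values of $\matV_q$; this does not affect your preferred argument.
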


\begin{proof}
  Let $\lambda_1, \dots, \lambda_n$ be the eigenvalues of $\matK$.
    \begin{eqnarray*}
    \Trace{\matK({\cal Q}, {\cal Q})} & = & \Trace{\matI_n  - \lambda \matL^{-1} \matL^{-\T}} \\
    & = & n - \lambda \Trace{\matL^{-1} \matL^{-\T}} \\
    & = & n - \lambda \Trace{(\matK + \matI_n)^{-1}} \\
    & = & n - \sum^{n}_{i=1}\frac{\lambda}{\lambda_i + \lambda} \\
    & = & \sum^{n}_{i=1}\frac{\lambda_i}{\lambda_i + \lambda} \\
    & = &  \Trace{(\matK + \lambda I_n)^{-1}  \matK}
  \end{eqnarray*}
\end{proof}

\begin{proof}[Proof of Theorem~\ref{thm:genmain}]
  We prove that with probability of at least $1-\delta$
  \begin{equation}
    \label{eq:gengoal}
    \frac{2}{3}(\matZ \matZ^\T + \lambda \matI_n) \preceq \matK + \lambda \matI_n \preceq 2(\matZ \matZ^\T + \lambda \matI_n)\,,
  \end{equation}
  or equivalently,
  \begin{equation}
      \label{eq:gengoal1}
  \frac{1}{2}(\matK + \lambda \matI_n) \preceq \matZ \matZ^\T + \lambda \matI_n \preceq \frac{3}{2}(\matK + \lambda \matI_n)\,.
  \end{equation}
  Thus, with probability of $1-\delta$ the relevant condition number is bounded by $3$. For PCG, if the condition number
  is bounded by $\kappa$, we are guaranteed to reduce the error (measured in the matrix norm of the linear equation) to an
  $\epsilon$ fraction of the initial guess after $\lceil \sqrt{\kappa}\ln(2/\epsilon)/2 \rceil$
  iterations~\cite{Shewchuk94}. This immediately leads to the bound in the theorem statement.

  Let $\matK + \lambda \matI_n = \matL \matL^\T$ be a Cholesky decomposition of $\matK + \lambda \matI_n$. Let
  $\matM = \matL^{-1}$. Let ${\cal Q} = (\q_1, \dots, \q_n) \subset {\cal H}_k$ defined by
  $
  \q_i = \sum^n_{j=1}\matM_{ij}k(\x_j, \cdot)\,.
  $
  Let $\matZ_{\cal Q}\in\R^{n \times s}$ be the matrix whose row $i$ is equal to $\varphi(k(\x_i, \cdot))$. Due
  to the linearity of $\varphi$, we have
  $\varphi(\q_i) = \sum^n_{j=1}\matM_{ij}\varphi(k(\x_j, \cdot)) = \sum^n_{j=1}\matM_{ij}\z_j$
  so $\matZ_{\cal Q} = \matL^{-1} \matZ$.

  It is well known that for $\matC$ that is square and invertible $\matA \preceq \matB$ if and only if
  $\matC^{-1} \matA \matC^{-T} \preceq \matC^{-1} \matB \matC^{-T}$. Applying this to the previous equation
  with $\matC = \matL$ yields
  \begin{equation}
    \label{eq:gengoal2}
  \frac{1}{2}\matI_n \preceq \matZ_{\cal Q} \matZ^\T_{\cal Q} + \lambda \matL^{-1} \matL^{-\T} \preceq \frac{3}{2}\matI_n\,.
  \end{equation}

  A sufficient condition for~\eqref{eq:gengoal2} to hold is that
  \begin{equation}
    \label{eq:goal3}
    \TNorm{\matZ_{\cal Q} \matZ^\T_{\cal Q} + \lambda \matL^{-1} \matL^{-\T} - \matI_n} \leq \frac{1}{2}\,.
  \end{equation}
  According to Proposition~\ref{prop:genid} we have
  \begin{equation*}
  \TNorm{\matZ_{\cal Q} \matZ^\T_{\cal Q} + \lambda \matL^{-1} \matL^{-\T} - \matI_n} =
  \TNorm{\matZ_{\cal Q} \matZ^\T_{\cal Q} - \matK({\cal Q}, {\cal Q})}\,.
  \end{equation*}
  Now, the approximate multiplication property along with the requirement that $s \geq  f(s_\lambda(\matK)^{-1}/2, 0, \delta)$ or  $s \geq  f(s_\lambda(\matK)^{-1}/2\sqrt{2}, 1/2\sqrt{2}, \delta)$ guarantee that with probability of at least $1-\delta$ we have
  \begin{equation*}
  \TNorm{\matZ_{\cal Q} \matZ^\T_{\cal Q} - \matK({\cal Q}, {\cal Q})} \leq \FNorm{\matZ_{\cal Q} \matZ^\T_{\cal Q} - \matK({\cal Q}, {\cal Q})} \leq \frac{1}{2}\,.
  \end{equation*}
  Now complete the proof using the equality
  $\Trace{\matK({\cal Q}, {\cal Q})} = s_\lambda(\matK)$ (Proposition~\ref{prop:genstat_dim}).

\end{proof}

\if0
 \section{Effectiveness of Kernel RLSC vs. Kernel SVM}

In the experiments we primarily use our algorithm for kernel ridge regression for classification
by using Kernel RLSC. However, using the hinge loss, thus leading to Kernel SVM, as opposed to squared loss
(as in Kernel RLSC) seems much more natural for classification, thus it is possible to wonder if our
algorithm is even relevant for classification. Nevertheless, our experience and others (e.g., \cite{FM01,RYP03}) is that
there is very little (if at all) gain with Kernel SVM vs. Kernel RLSC.

In addition to the citations above, we offer the following evidence (percentages are error rates;
kernel is always Gaussian):
\begin{itemize}

\item {\sc MNIST}: From Yann LeCun's website: 1.4\% with Kernel SVM using Gaussian kernel. We obtain 1.33\%.

\item {\sc IJCNN1}: 1.31\% with Kernel SVM~\cite{HSD14}. We obtain 1.39\%.

\item {\sc BINARY COVTYPE}: 3.85\% with Kernel SVM~\cite{HSD14}. We obtain 3.75\%\footnote{The 4.45\% reported in the
paper is with multiclass {\sc COVTYPE}.}.

\item {\sc ADULT}: 15.6\% with Kernel SVM~\cite{YouEtAl15}. We obtain 14.99\%.

\item {\sc GISETTE}: 2.4\% with a {\em non scalable} Kernel SVM algorithm. Scalable algorithms obtained
               3.5\%~\cite{YouEtAl15}. We obtain 3.5\%.

\end{itemize}

Clearly Kernel RLSC is a reasonable choice for classification, and preference should be primarily based on
training time and testing the time (the last one is more favorable for SVM).
\fi

\if0
\section{Running the Code and Reproducing the Results on EC2}

\ifdefined\isaccepted

Detailed instructions
on running the code and reproducing the experiments on EC2 appear in the supplementary
materials.

\else

Omitted from the conference submission in order to maintain anonymity.

\fi

\fi

\end{document}
